\numberwithin{equation}{section}
\newtheorem{dfn}{Definition}[section]
\newtheorem{thm}[dfn]{Theorem}
\newtheorem{lma}[dfn]{Lemma}
\newtheorem{ppsn}[dfn]{Proposition}
\newtheorem{crlre}[dfn]{Corollary}
\newtheorem{prob}{Question}
\newtheorem{rmrk}[dfn]{Remark}
\DeclarePairedDelimiterX{\norm}[1]{\lVert}{\rVert}{#1}
\DeclarePairedDelimiterX{\bnorm}[1]{\big\lVert}{\big\rVert}{#1}
\DeclarePairedDelimiterX{\Bnorm}[1]{\Big\lVert}{\Big\rVert}{#1}
\newcommand{\Z}{\mathbb{Z}}
\newcommand{\T}{\mathbb{T}}
\newcommand{\D}{\mathbb{D}}
\newcommand{\C}{\mathbb{C}}
\newcommand{\hcl}{\mathcal{H}}
\begin{document}
	
	
	\title[Commutants and Complex Symmetry of $M_{B}$]{Commutants and Complex Symmetry of Finite Blaschke Product Multiplication Operator in $\bm{L^2(\T)}$}
	
	\author[Chattopadhyay] {Arup Chattopadhyay}
	\address{Department of Mathematics, Indian Institute of Technology Guwahati, Guwahati, 781039, India}
	\email{arupchatt@iitg.ac.in, 2003arupchattopadhyay@gmail.com}

	\author[Das]{Soma Das}
	\address{Department of Mathematics, Indian Institute of Technology Guwahati, Guwahati, 781039, India}
	\email{soma18@iitg.ac.in, dsoma994@gmail.com}

	\subjclass[2010]{47A05, 47A15, 47B35, 30H10}
	
	\keywords{Commutant, Conjugation, Hardy space, model space, complex symmetric operator, Blaschke product.}
	
	\begin{abstract}
	Consider the multiplication operator $M_{B}$ in $L^2(\T)$, where the symbol $B$ is a finite Blaschke product. In this article, we characterize the commutant of $M_{B}$ in $L^2(\T)$, noting the fact that $L^2(\T)$ is not an RKHS. As an application of this characterization result, we explicitly determine the class of conjugations commuting with $M_{z^2}$ or making $M_{z^2}$ complex symmetric by introducing a new class of conjugations in $L^2(\T)$. Moreover, we analyze their properties while keeping the whole Hardy space, model space, and Beurling-type subspaces invariant. Furthermore, we extended our study concerning conjugations in the case of finite Blaschke.

 \end{abstract}
	\maketitle
	
	\section{Introduction}
	 Suppose $T$ is a bounded linear operator on a separable Hilbert space $\hcl$. Then the commutant of $T$ is usually denoted by $\{T\}^\prime$ and defined by $\{T\}^\prime :=\{S\in \mathcal{B}(\hcl): ST=TS\}$, where $\mathcal{B}(\hcl)$ is the space of all bounded linear operators on $\hcl$. Given a bounded linear operator $T$, it is always interesting to find the commutant  $\{T\}^\prime$ since it plays an important role in determining the structure of the operator. Another strong reason for identifying the commutant of an operator $T$ is to recognize all reducing subspaces of $T$. In this direction, there are various nice works available in the literature, specially for analytic multiplication operators on some well-known function spaces like the Hardy space, the Bergman space, and also for reproducing kernel Hilbert spaces. The detailed can be found in \cite{DoPuWa12, GuHu15, GuSuZhZh09, StZh02, Zh00}.

Let $\mathbb{D}=\{z\in \mathbb{C}:~|z|<1\}$ denote the unit disc in the complex plane and $\mathbb{T}=\{z\in \mathbb{C}:~|z|=1\}$ be the unit circle. Let us denote by  $L^2 (\mathbb{T})$ (or simply $L^2$) the space of all square-integrable functions over the unit circle $\T$. The Banach space of all essentially bounded functions on the unit circle $\T$ is denoted by $L^\infty(\T)$ (or $L^\infty$ in short). Therefore, if $f\in L^2(\mathbb{T})$, then the function $f$ can be expressed as $$f(z)=\sum_{n=-\infty}^{\infty}\hat{f}(n)z^n,$$ where $\hat{f}(n)$ denotes the $n$-th Fourier coefficient of $f$ and $\sum\limits_{n=-\infty}^{\infty}|\hat{f}(n)|^2 <\infty$. For $\phi\in L^{\infty}$, let $M_{\phi}$ denote the multiplication operator on $L^2$ defined as
$M_{\phi}(f)=\phi f,\quad f\in L^2.$
Recall that the classical Hardy space over the unit disc $\mathbb{D}$ is denoted by $H^2(\mathbb{D})$ and defined by
\begin{equation*}
	H^2(\mathbb{D}):=\Big\{f(z)=\sum_{n=0}^{\infty}a_nz^n:~\|f\|_{H^2(\mathbb{D})}^2:=\sum_{n=0}^{\infty}|a_n|^2,~z\in \mathbb{D},~a_n\in \mathbb{C}\Big\}.
\end{equation*}
For $f\in H^2(\D)$, the non-tangential boundary limit (radial limit) $\tilde{f}(e^{i\theta})=\lim_{r\to 1-}f(re^{i\theta})$ exists almost everywhere on $\T$ \cite{EMV1,EMV2,RSN}.
Therefore $H^2(\D)$ can be embedded isometrically as a closed subspace of $L^2(\mathbb{T})$ by identifying $H^2(\D)$ through the nontangential boundary limits of the $H^2(\D)$ functions. In other words, we can recognize the Hardy space $H^2(\D)$ as a closed subspace of $L^2(\mathbb{T})$, say $H^2$, consisting of $L^2$ functions having negative Fourier coefficients zero. We denote by $H^{\infty}$ the space of all bounded analytic functions on $\D$ that can be identified with a closed subspace of $L^{\infty}$, that is $H^\infty:= L^\infty \cap H^2$. An inner function $\theta$ is an element in $H^\infty$ such that $|\theta | =1$ almost everywhere on $\T$. For $\phi\in L^{\infty}$, define the Toeplitz operator $T_{\phi}:H^2\to H^2$ by $T_{\phi}f=P(\phi f),$ where $P$ is an orthogonal projection from $L^2$ onto $H^2$. Recall that the unilateral shift $S$ on $H^2$ is defined by $Sf(z)=zf(z), f\in H^2$, a restriction of the bilateral shift $M_z$ on $L^2$. A well-known theorem due to $Beurling$ characterizes all the $S$- invariant subspaces of $H^2$. In other words, any $S$-invariant subspace of $H^2$ is of the form $\theta H^2$, where $\theta$ is an inner function. 
For any inner function $\theta$, the model space $K_\theta$ is defined by $K_\theta := H^2\ominus \theta H^2$, which is $S^*$ invariant. For more on model spaces and related stuff, we refer the reader to \cite{EMV1,EMV2,GMR2016,RSN}. 

The commutant of $S$ in $H^2$ is $\{T_{\phi}:~\phi\in H^{\infty}\}$ and the commutant of $M_z$ in $L^2$ is $\{M_{\phi}:~\phi\in L^{\infty}\}$ \cite{RSN},\cite{RH1973}. In general, it is highly-non trivial to describe the commutant of a given bounded linear operator. In this direction, the commutant of more general shift operators and  Toeplitz operators in various function spaces has been studied by many authors (see \cite{ACZ,CC78,CC80,CD78,DW,KR07,KR01,N67,SV}).

For a finite Blaschke product $B$, let $M_B$ be the operator of multiplication by $B$ in $L^2(\T)$. The commutants of finite
Blaschke product Toeplitz operators $T_B$ are ‘well known’ for the Hardy space $H^2$ \cite{CC78}. So in some sense, we can say that these are the ‘same’ operators that commute with $M_B$ on other Hilbert spaces. In 2015 \cite{CoWa15}, Cowen and Wahl gave a detailed independent description of the operators on the Hilbert space $\hcl$ of analytic functions defined on $\D$ that commutes with $M_B$. It is worth mentioning that the Hilbert space $\hcl$ in their work is a reproducing kernel Hilbert space (RKHS) (see \cite{PaRa16}). Abkar, Cao, and Zhu \cite{ACZ} gives a complete description of the commutant of $M_{z^n}$ on a family of analytic function spaces on $\D$, which include the Bergman space, the Hardy space, and the Dirichlet space. Note that these are shift operators of higher multiplicity, in other words, special multiplication operator by particular finite Blaschke. More explicitly, the authors in \cite{ACZ} provide the description regarding the commutant of $M_{z^n}$ by identifying with $n$ copies of the
multiplier algebra of the underlying function spaces. Recently, Gallardo and Partington \cite{GaPa22} provide a characterization of the commutant of $T_B$  extending the work on the weighted Bergman spaces. 

In this direction, it is worth noting that $L^2(\T)$ is an important function space having various nice properties, but it is not an RKHS \cite{PaRa16}. Also, observe that $L^2(\T)$ does not come as a particular example of the above-mentioned spaces where the characterization of the commutant of $M_B$ has been studied. Our primary goal in this paper is to characterize the commutant of $M_B$ independently in the $L^2(\T)$ -space and obtain a detailed description of the commutant in terms of $L^\infty$- functions. As a consequence, by using the precise structure of the commutator, we will obtain some applications toward characterizing conjugations in $L^2(\T)$.

Now the study of conjugations and complex symmetric operators has become a widely-interested research area because of its significance in the fields of operator theory, complex analysis, and physics. Recently, several authors have been interested in non-Hermitian quantum mechanics and the spectral analysis of certain complex symmetric operators (for more details, see \cite{BFG, GPP2014}). One of the motivations for studying conjugations comes from the study of complex symmetric operators. The study of complex symmetric (in short, C-symmetric) operators was initiated by Garcia, Putinar, and Wogen in \cite{GP2005, GP2007, GW2009, GW2010}. Technically speaking, a conjugation $C$ is an antilinear and isometric involution on $\hcl$. In other words, a conjugate-linear operator $C:\hcl \to \hcl$ is said to be a conjugation if it satisfies the following two conditions:
\begin{enumerate}[(a)]
	\item $C^2 =I_{\hcl}$, (the identiy operator on $\hcl$)
	\item $\langle Cf,Cg \rangle = \langle  g,f \rangle , \forall f,g \in \hcl$.
\end{enumerate}
It is interesting to observe that the conjugation is a straightforward generalization of the conjugate-linear map $z\longrightarrow \bar{z}$ on the one-dimensional Hilbert space $\mathbb{C}$. Moreover, we can also interpret conjugation as a special kind of antilinear operator and note that the antilinear operator is the only type of non-linear operator that is important in the field of quantum mechanics (for more details, see \cite{Mess, SL1996, Sw1995}). In this connection, it is remarkable to mention that Garcia and Putinar have shown in \cite{GP2005} that for any given conjugation $C$ on $\hcl$, there exists an orthonormal basis $\{e_n: n\in \mathbb{N}_0\}$ such that $Ce_n=e_n$ for all $n\in \mathbb{N}_0$, where $\mathbb{N}_0$ denotes the set of all non-negative integers.
\begin{dfn}
	An operator $T\in \mathcal{B}(\mathcal{H})$ is called  $C$-symmetric if there exists a conjugation $C$ on $\mathcal{H}$  such that  $CTC=T^*$, where $T^*$ is the adjoint of $T$. If $T$ is $C$-symmetric for some conjugation $C$, then $T$ is called complex symmetric.
\end{dfn}
The class of complex symmetric operators includes all normal operators, Hankel operators, truncated Toeplitz operators, and Volterra integration operators. For more on complex symmetric operators and related topics, including historical comments, we refer the reader to \cite{GP2005, GP2007, GW2009, GW2010, LeeKo2016, Noor} and the references cited therein. We denote by $\mathcal{B}A(\hcl)$ the space of all bounded antilinear operators on $\hcl$. Recall that for $A \in \mathcal{B}A(\hcl)$ there exists a unique antilinear operator $A^\sharp$, known as the antilinear adjoint of $A$, defined by the equality
$$ \langle Af,g\rangle =\overline{\langle f,A^\sharp g\rangle}.$$ 
It is important to observe that  $C^\sharp=C$ for any conjugation $C$, $(AB)^{\sharp}=B^*A^{\sharp}$, and $(BA)^{\sharp}=A^{\sharp}B^*$ for $A \in \mathcal{B}A(\hcl)$, $B\in\mathcal{B}(\mathcal{H})$ (see, e.g., \cite{CAMARA1}).

\noindent Two natural conjugations $J$ and $J^\star$ on $L^2(\T)$ are defined as follows
\begin{equation}\label{conjueq}
	Jf=\bar{f}, \quad J^\star f= f^\#,\quad \text{where}\quad \bar{f}(z)=\overline{f(z)}\quad \text{and} \quad f^\#(z)=\overline{f(\bar{z})}.
\end{equation}
It is important to observe that the conjugation $J$ intertwines the operators $M_z$ and $M_{\bar{z}}$. In other words $M_z$ is $J$ -symmetric, that is $JM_zJ =M_{\bar{z}}$. Moreover, $JH^2 = \overline{H^2}$, that is $J$ maps an analytic function to a co-analytic function. On the other hand, the conjugation $J^\star$ has different nature in comparison to $J$. For example, $J^\star M_z =M_zJ^\star$ and it preserves the Hardy space $H^2$. Furthermore, it is worth mentioning that the map $J^\star$ has a connection with model spaces \cite{Cima} and Hankel operators \cite{RSN}. In this connection,   recently, C\^{a}mara-Garlicka-\L{a}nucha-Ptak raised a natural question in \cite{CAMARA2} regarding the characterization of conjugations that possess similar kinds of properties as $J$ and $J^\star$. More precisely, they characterize the class of conjugations that commute with $M_z$ and also classify the class of conjugations intertwining the operators $M_z$ and $M_{\bar{z}}$ in scaler-valued $L^2(\T)$ space \cite{CAMARA1, CAMARA2} as well as in vector-valued $L^2(\hcl)$ space \cite{CAMARA3} over the unit circle $\T$. Therefore it is natural to study the complex symmetry of $M_B$ in $L^2(\T)$, where $B$ is a finite Blaschke product. To this end, we begin with the study of the complex symmetry of $M_{z^2}$  in $L^2(\T)$ since it will help to complement the study of  $M_{B}$. It is important to observe that the commutant of $M_z$ is properly contained in the commutant of $M_{z^2}$ and also the class of conjugations intertwining the operators $M_{z^2}$, $M_{\bar{z}^2}$ in $L^2$ is bigger than the class of conjugations intertwining the operators $M_{z}$, $M_{\bar{z}}$ in $L^2$. In this direction, we begin with the following two questions.
\begin{prob}\label{Q1}
	Which class of conjugations commute with $M_{z^2}$ in $L^2(\T)$ but not with $M_z$ ? 
\end{prob}
\begin{prob}\label{Q2}
	Which class of conjugations makes the operator $M_{z^2}$ complex symmetric in $L^2(\T) $ but does not possess the same characteristic with $M_z$?
\end{prob}

Later in Section 4, we provide appropriate answers to the above questions. Note that our work mainly focus on the case for $only~M_{z^2}-commuting$ conjugations and $M_{z^2},M_{\bar{z}^2}$-intertwining (but not $M_{z},M_{\bar{z}}$-intertwining ) conjugations throughout this article. Note that the operator $M_z$ in $\C^n$-valued $L^2$ space over $\T$ is unitary equivalent to $M_B$ in the classical $L^2$ space over $\T$, where $B$ is a finite Blaschke product of degree $n$. Since the characterization of the class of conjugations that commute with $M_z$ and also the class of conjugations intertwining the operators $M_z$ and $M_{\bar{z}}$ have been studied in the vector-valued $L^2(\hcl)$ space in \cite{CAMARA3}, then in some sense such characterization results regarding $M_{z^2}$ can also be given in $L^2(\T)$ space. But the novelty of our characterization results lies in the fact that we have the precise structure of such conjugations, which can not be achieved easily using the above-mentioned unitary equivalence. Moreover, a further benefit of our description is that one can independently classify the conjugations in $L^2(\T)$, which do not possess similar properties with $M_z$. 

In other words, the precise (or major) contributions in this article are the following:
\begin{itemize}
	\item We identify the commutant of $M_B$ in $L^2$, where $B$ is a finite Blaschke product.
	\item Using the above characterization, we classify the set of conjugations that commute with $M_{z^2}$ but fail to commute with $M_z$. 
	\item Class of conjugations intertwining the operators  $M_{z^2}$ and $M_{\bar{z}^2}$ but not necessarily intertwining the operators $M_z$ and $M_{\bar{z}}$ has been identified.
	\item As a consequence of our main results, we investigate the properties of the above classes of conjugations keeping the whole Hardy space, model space, and Beurling-type subspaces invariant.
	\item At the end, we extend our discussions in connection to $M_B$, where $B$ is a finite Blaschke product.
\end{itemize}

The paper is organized as follows: In section $2$, we characterize the commutant of the multiplication operator $M_B$ in $L^2(\T)$ for any finite Blaschke product $B$. Some new conjugations in $L^2(\T)$ have been introduced as a generalization of some well-known conjugations, in section $3$. Next, we provide the answer to \textbf{Question 1} and \textbf{Question 2}
in section 4. In section 5, we discuss the properties of such conjugations that keep invariant the entire Hardy space, model space, and Beurling-type subspaces. Section 6 is devoted to studying the related results in the case of finite Blaschke product $B$ and, in particular, for $B(z)=z^n$.

\section{Commutant of $M_B$ in $L^2(\T)$}
In this section, we provide a characterization of commutants of the multiplication operator $M_B$ having symbol $B$, where $B$ is a finite Blaschke product. It is worth mentioning that, in \cite{ACZ, CoWa15, GaPa22}, the authors obtained the commutant of $M_B$ in various reproducing kernel Hilbert spaces of analytic functions. In contrast with the existing results, we obtain the commutant of $M_B$ in a non-RKHS, namely in  $L^2(\T)$, and our characterization result is different from the existing ones. We denote $\{M_B\}^\prime$ to be the set of all bounded linear operators on $L^2(\T)$ that commutes with $M_B$. In other words, if $T\in \{M_B\}^\prime$, then $T:L^2(\T)\to L^2(\T)$ is a bounded linear operator such that $TM_B=M_BT$.
Our main aim is to characterize the set $\{M_B\}^\prime$, and to achieve our goal; we need the following useful lemma.

\begin{lma}\label{lm 3.1}
	Let $\phi \in L^2(\T)$ be such that $\phi f \in L^2(\T)$  for all $f\in L^2(\T)$, then $\phi \in L^\infty (\T)$.
\end{lma}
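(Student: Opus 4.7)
The plan is to use the closed graph theorem to promote the pointwise hypothesis to a boundedness estimate for the multiplication operator, and then rule out essential unboundedness of $\phi$ by testing against normalised indicator functions of level sets.

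First, I would consider the linear map $M_\phi : L^2(\T) \to L^2(\T)$ defined by $M_\phi f = \phi f$, which is everywhere defined on $L^2(\T)$ by hypothesis. To show it is closed, suppose $f_n \to f$ in $L^2$ and $\phi f_n \to g$ in $L^2$. Extract a subsequence along which $f_{n_k} \to f$ almost everywhere on $\T$, and then a further subsequence along which $\phi f_{n_{k_j}} \to g$ almost everywhere. The first of these gives $\phi f_{n_{k_j}} \to \phi f$ almost everywhere, forcing $g = \phi f$ in $L^2(\T)$. Hence the graph of $M_\phi$ is closed, and the closed graph theorem supplies a constant $C > 0$ with
\[
\norm{\phi f}_{L^2(\T)} \le C \, \norm{f}_{L^2(\T)} \qquad \text{for all } f \in L^2(\T).
\]

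Next I would argue by contradiction: assume $\phi \notin L^\infty(\T)$. Then for every $n \in \N$ the level set $E_n = \{z \in \T : |\phi(z)| > n\}$ has positive Lebesgue measure $m(E_n) > 0$. Take the test function $f_n = m(E_n)^{-1/2}\,\chi_{E_n}$, which satisfies $\norm{f_n}_{L^2(\T)} = 1$, yet
\[
\norm{\phi f_n}_{L^2(\T)}^2 \;=\; \frac{1}{m(E_n)} \int_{E_n} |\phi|^2 \, dm \;\ge\; n^2,
\]
so $\norm{\phi f_n}_{L^2(\T)} \ge n$ for every $n$. This contradicts the bound $\norm{\phi f_n}_{L^2(\T)} \le C$ obtained above, and therefore $\phi \in L^\infty(\T)$.

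The only mildly technical point is the verification that $M_\phi$ has closed graph, which is handled by the standard passage to an almost-everywhere convergent subsequence; once boundedness is in hand, the contradiction from the level sets is immediate and is the natural quantitative obstruction to essential boundedness.
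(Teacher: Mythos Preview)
Your proof is correct and follows essentially the same approach as the paper: both establish boundedness of the multiplication operator via the closed graph theorem (with the same a.e.\ subsequence extraction) and then use the level sets $E_n=\{|\phi|>n\}$ together with their indicator functions to conclude $\phi\in L^\infty(\T)$. The only cosmetic differences are the order in which the two subsequences are extracted and your normalisation of $\chi_{E_n}$, neither of which affects the argument.
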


\begin{proof}
	By hypothesis, we define a linear operator as follows:
	\begin{align*}
		A_\phi : &~ L^2(\T)\to L^2(\T)\\
		& f\to \phi f.
	\end{align*}	
	Let us denote the Graph of $A_\phi$ by $\mathcal{G}r(A_\phi)$. To serve the purpose, suppose the sequence $\{(f_n, A_\phi f_n)\} \in \mathcal{G}r(A_\phi)$  converges to $(f,g)$ in $L^2(\T)$. Then there exists a subsequence $\{f_{n_j}\}$ of $\{f_n\}$ such that $A_\phi f_{n_j} \to g$ a.e. on $\T$, in other words, $\phi f_{n_j} \to g$ a.e. on $\T$. Moreover, $f_{n_j}$ also converges to $f$ in $L^2(\T)$, and hence  there exists a subsequence $\{f_{n_{j_k}}\}$ of $\{f_{n_j}\}$ such that $f_{n_{j_k}}$ converges to $f$ a.e. on $\T$. Therefore, $\phi f_{n_{j_k}}\to \phi f$ a.e. on $\T$ as well as  $\phi f_{n_{j_k}}\to g$ a.e. on $\T$. Thus  $\phi f = g$ a.e. on $\T$ implying that $A_\phi f = g$. Therefore by Closed-Graph theorem, $A_\phi$ is a bounded operator. Next, we will show that $\phi \in L^\infty (\T)$. To proceed further,  we define
	$E_n := \{e^{i\theta} \in \T: |{\phi (e^{i\theta})}| > n \}$ for $n\in \mathbb{N}$.
	Let $\chi_{_{E_n}}$ be the characteristic function of the set $E_n$. Since $A_\phi $ is a bounded linear operator, then we get
	\begin{align*}
		\norm {A_\phi \chi_{_{E_n}}}^2 \leq \norm {\chi_{_{E_n}}}^2 \norm{A_\phi}^2. 
	\end{align*}
	If $m$ is a normalized Lebesgue measure on $\T$, then we also have
	\begin{align*}
		\norm {A_\phi \chi_{_{E_n}}}^2  = \int_{_{E_n}}|{\phi (e^{i\theta})}|^2 > n^2~ m(E_n).
	\end{align*}
	Therefore $n^2~ m(E_n) < \norm {A_\phi \chi_{_{E_n}}}^2 \leq \norm{A_\phi}^2 m(E_n) $, and hence $n> \norm {A_\phi}$ which implies $m(E_n)=0$. Consequently, we conclude that $\phi \in L^\infty (\T)$.	
\end{proof}
Before going to the main theorem in this section, we need to recall some basic facts. We know that for a finite Blaschke product $B$, the dimension of the model space $K_B:= H^2\ominus BH^2$ is finite. In fact, if the degree of $B$ is a finite natural number $n$, then $dim(K_B)=n$. Let $\{h_1,h_2,\ldots,h_n \}$ be an orthonormal basis of $K_B$. Due to \textbf{Wold-type} decomposition, the space $L^2(\T)$ can be viewed as follows
\begin{equation*}
	L^2(\T)= \bigoplus_{k=-\infty}^{\infty}K_B B^k .
\end{equation*} 
Using the above decomposition we identify $L^2(\T)$ as orthogonal direct sum of $n$-closed subspaces, that is
\begin{equation}\label{l2}
	L^2(\T)= \hcl_1(h_1,B) \oplus \hcl_2(h_2,B) \oplus \cdots \oplus \hcl_n(h_n,B),
\end{equation}
where
\begin{equation*}
	\hcl_1(h_1,B):=\bigvee_{k=-\infty}^\infty \{h_1 B^k\}, ~\hcl_2(h_2,B):=\bigvee_{k=-\infty}^\infty \{h_2 B^k\},\ldots ,\hcl_n(h_n,B):=\bigvee_{k=-\infty}^\infty \{h_nB^k\}.
\end{equation*}
Therefore for any $f\in L^2(\T) $ can be represented as $f=f_1+f_2+ \cdots +f_n$, where $f_i \in \hcl_i(h_i,B)$ for $i\in \{1,2,\cdots ,n\}$.
For a finite sequence $\{\lambda_k \}_{1\leq k \leq n}$ (maybe repeating) in $\D$, we consider the following finite Blaschke product
\begin{equation}\label{B}
	B(z)=\prod_{k=1}^n\dfrac{\lambda_k -z}{1-\bar{\lambda}_k z}~, z\in \overline{\D} .
\end{equation}
For $1\leq j\leq n$, consider the following functions
\begin{align}\label{e_j}
	e_j(z) = &\Big(\prod_{k=1}^{j-1}\dfrac{\lambda_k -z}{1-\bar{\lambda}_k z}\Big)\dfrac{\sqrt{1-|\lambda_j|^2}}{1-\bar{\lambda}_j z},  
\end{align}
then it is easy to observe that $\{e_1,e_2,\ldots ,e_n \}$ forms an orthonormal basis of $K_B$. In fact, this is a standard orthonormal basis of $K_B$, and for more details, we refer to [\cite{EMV1}, Theorem:14.7]. It is worth mentioning some nice properties of $e_j$ as follows:
\begin{enumerate}[(i)]
	\item  $e_j \in H^\infty (\T)$ for $1\leq j\leq n$.
	\item For $1\leq j\leq n$, each $e_j$ is invertible in $L^\infty (\T)$, and \begin{equation*}
		e_j ^{-1}(z)=\Big(\prod_{k=1}^{j-1}\dfrac{1-\bar{\lambda}_k z}{\lambda_k -z}\Big)\dfrac{1-\bar{\lambda}_j z}{\sqrt{1-|\lambda_j|^2}}.
	\end{equation*}
\end{enumerate}

Now we proceed to find the commutant of $M_B$. To this end, we use the decomposition of $L^2(\T)$ as an orthogonal direct sum of $n$-subspaces given in \eqref{l2}, along with the fact that $B$ is of the form \eqref{B} and $\{e_1,e_2,\ldots ,e_n \}$ is the required orthonormal basis as mentioned in \eqref{e_j}. Therefore, for any $f\in L^2(\T) \big(= \hcl_1 (e_1,B) \oplus \hcl_2(e_2,B) \oplus \cdots \oplus \hcl_n(e_n,B)\big)$ can be written as $f=f_1\oplus f_2\oplus\cdots \oplus f_n$, where $f_i \in \hcl_i(e_i,B) $ for each $i\in \{1,2,\ldots ,n \}$. Let  $S\in \{M_B \}^\prime$, then $S$ is a bounded linear operator on $L^2(\T)$ such that $M_BS=SM_B$. Since $f_j\in \hcl_j(e_j,B) $, then there exists $g_j=\sum\limits_{k=-\infty}^{\infty}a^j_kB^k \in L^2(\T)$ such that $f_j=e_jg_j$ for  $j\in \{1,2, \ldots ,n \}$. Since $e_j^{-1}\in L^\infty(\T)$, then by using the fact that $S$ commutes with $M_B$ we have
\begin{align*}
	Sf_j &= (Se_j)g_j
	=(Se_j)e_j^{-1}e_j g_j
	=(Se_j)e_j^{-1}f_j=\phi_jf_j,
\end{align*}
where $\phi_j = (Se_j)e_j^{-1} \in L^2(\T)$. Next we claim that $\phi_j \in L^\infty (\T)$ for $j\in \{1,2,\ldots ,n\}$.
We already proved that $\phi_jf_j =Sf_j\in L^2(\T)$. Now for $i\neq j$,
\begin{align*}
	\phi_jf_i & = \phi_j(e_ig_i)=e_j^{-1}e_j \phi_j(e_ig_i)=e_j^{-1}e_i(\phi_je_jg_i)=e_j^{-1}e_i S(h_j) \in L^2(\T),
\end{align*}
where $h_j= e_jg_i \in \hcl_j(e_j,B)$. Thus we have $\phi_jf=\phi_j f_1+\phi_jf_2+\cdots +\phi_jf_n\in L^2(\T)$ for any $f\in L^2(\T)$. Therefore by using  Lemma \ref{lm 3.1} we conclude $\phi_j \in L^\infty(\T)$ for $j\in \{1,2,\ldots ,n\}$. Since each $\phi_j\in L^\infty(\T)$, then for $f\in L^2(\T)$ the action of $S$ is given as follows
\begin{align*}
	Sf = & \phi_1f_1 + \phi_2f_2 + \cdots +\phi_nf_n \\
	= & M_{\phi_1}f_1 + M_{\phi_2}f_2 + \cdots + M_{\phi_n}f_n.
\end{align*} 
From now onward, we use the notation $M_{[\phi_1,\phi_2,\ldots ,\phi_n]}$ to denote the bounded linear operator on $L^2(\T)$ such that 
\begin{align}\label{mphi}
	M_{[\phi_1,\phi_2,\ldots ,\phi_n]}(f)=M_{\phi_1}f_1 + M_{\phi_2}f_2 + \cdots + M_{\phi_n}f_n.
\end{align}
Summing up, we have the following main theorem in this section, and it is extremely useful in later sections to prove other characterization results. Moreover, we expect that the following theorem will have an independent interest in the future.

\begin{thm}\label{comm}
	Let $B$ be a finite Blaschke product of degree $n$ and $S\in \{M_B\}^\prime$ in $L^2(\T)$. Then there exist $n$-number of $L^\infty(\T)$ functions $\phi_1,\phi_2,\ldots ,\phi_n$ such that $S=M_{[\phi_1,\phi_2,\ldots ,\phi_n]}$. 
\end{thm}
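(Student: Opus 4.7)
Begin with the orthogonal decomposition
$L^2(\T) = \hcl_1(e_1, B) \oplus \cdots \oplus \hcl_n(e_n, B)$
recorded in \eqref{l2}, built from the canonical orthonormal basis $\{e_1,\ldots,e_n\}$ of $K_B$ given by \eqref{e_j}; the two structural features I want to exploit are that each $e_j$ lies in $H^\infty(\T)$ and each $e_j^{-1}$ lies in $L^\infty(\T)$. The plan is to produce the symbols explicitly by $\phi_j := (Se_j)\,e_j^{-1} \in L^2(\T)$ and then promote each $\phi_j$ to $L^\infty(\T)$ by verifying the hypothesis of Lemma~\ref{lm 3.1}.

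A first ingredient I would need is that $S$ in fact commutes with $M_{B^k}$ for every $k \in \Z$, not just for $k \ge 0$. Since $|B|=1$ a.e.\ on $\T$, the operator $M_B$ is unitary, so the identity $SM_B = M_B S$ yields $M_B^{-1} S M_B = S$, whence $SM_B^{-1} = M_B^{-1} S$; iterating gives commutation with $M_{B^k}$ for every integer $k$ (alternatively, one may invoke Fuglede's theorem since $M_B$ is normal). This is the mechanism that lets Laurent expressions in $B$ slide through $S$.

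The main step is then to verify that $\phi_j f \in L^2(\T)$ for each $f \in L^2(\T)$, splitting $f = f_1 + \cdots + f_n$ with $f_i = e_i g_i$, where $g_i \in L^2(\T)$ is an $L^2$-limit of Laurent polynomials $g_i^N = \sum_{|k|\le N} a_k^i B^k$. In the diagonal case $i = j$, the approximations $e_j g_j^N \to f_j$ in $L^2$ combine with $S(e_j B^k) = B^k (Se_j)$ to give $S(e_j g_j^N) = g_j^N (Se_j)$; passing to a subsequence with pointwise a.e.\ convergence identifies $Sf_j$ with $(Se_j) g_j = \phi_j e_j g_j = \phi_j f_j$, and in particular $\phi_j f_j \in L^2(\T)$. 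In the off-diagonal case $i \ne j$, I would use the purely algebraic identity
\begin{equation*}
	\phi_j f_i \;=\; e_j^{-1} e_j \phi_j (e_i g_i) \;=\; e_j^{-1} e_i (\phi_j e_j g_i) \;=\; e_j^{-1} e_i \cdot S(e_j g_i),
\end{equation*}
which lies in $L^2(\T)$ because $e_j^{-1}, e_i \in L^\infty(\T)$ and $S(e_j g_i) \in L^2(\T)$. Summing over $i$ gives $\phi_j f \in L^2(\T)$ for every $f \in L^2(\T)$, and Lemma~\ref{lm 3.1} forces $\phi_j \in L^\infty(\T)$.

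Reading off $Sf_j = \phi_j f_j$ from the diagonal case and summing in $j$ then yields $Sf = \sum_{j=1}^n \phi_j f_j = M_{[\phi_1,\ldots,\phi_n]}f$, which is the required form. I expect the main technical obstacle to be the diagonal step: the product $(Se_j)\,g_j$ is a priori only in $L^1(\T)$, since both factors lie in $L^2(\T)$ and $g_j$ need not be bounded. The remedy is to argue through the $L^\infty$-controlled approximations $e_j g_j^N$, invoke continuity of $S$ on $L^2(\T)$, and extract a pointwise a.e.\ convergent subsequence to pin down the $L^2$ limit of $g_j^N (Se_j)$ unambiguously as $\phi_j f_j$; everything else in the argument is either algebraic or an application of Lemma~\ref{lm 3.1}.
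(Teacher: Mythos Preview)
Your argument is correct and follows essentially the same route as the paper: the same orthogonal decomposition, the same definition $\phi_j=(Se_j)e_j^{-1}$, the same off-diagonal algebraic trick, and the same appeal to Lemma~\ref{lm 3.1}. If anything you are more careful than the paper in two places---you justify commutation with negative powers of $B$ via the unitarity of $M_B$, and you handle the diagonal identity $Sf_j=\phi_j f_j$ through an $L^2$/a.e.\ subsequence argument rather than tacitly treating $g_j$ as a multiplier---but the skeleton is identical.
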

One can find the classical theorem for the commutant of the shift operator on $L^2(\T)$-space in \cite{RSN}(see Theorem: 2.2.5), which says that the commutant of $M_z$ in $L^2(\T)$ is $\{M_\phi:\phi \in L^\infty(\T)\}$. It is worth noticing that,  if we consider $\phi_1=\phi_2=\cdots =\phi_n=\phi$ in \eqref{mphi}, then $M_{[\phi_1,\phi_2,\ldots ,\phi_n]}$ turns out to be the classical multiplication operator $M_\phi$.  Therefore  we have the  following remark:
\begin{rmrk}\label{mzcomrem}
	Suppose $B(z)=z^2$ and $S\in \{M_{z^2}\}^\prime$. Since $\{M_z\}^\prime \subset \{M_{z^2}\}^\prime$, then
	\begin{enumerate}[(i)]
		\item for $S\in \{M_z\}^\prime$, there exists $\phi \in L^\infty$ such that $S=M_{[\phi,\phi]}=M_{\phi}$,
		\item for $S\notin \{M_z\}^\prime$, there exist $\phi_1,\phi_2 \in L^\infty$ with $\phi_1\neq\phi_2$ such that $S=M_{[\phi_1,\phi_2]}$.
	\end{enumerate}
\end{rmrk} 
In a similar fashion, one can distinguish between the commutant of $M_z$ and the commutant of $M_B$, where $B$ is a finite Blaschke product.

\section{Conjugations in $L^2(\T)$}
We begin the section by recalling two natural conjugations in $L^2(\T)$ which have been mentioned earlier, namely $J$ and $J^\star$, where $J$ is defined by $Jf=\bar{f}$ and  $J^\star$ is given by $J^\star f= f^\sharp$, where $f^\sharp(z)=\overline{f(\bar{z})}$. These two conjugations have some interesting properties. The bilateral shift  $M_z$ becomes complex symmetric in $L^2(\T)$ with respect to the  conjugation $J$, and it maps the analytic functions space  $H^2(\T)$ into the co-analytic functions space $\overline{H^2}(\T)$, that is, $J(H^2(\T))=\overline{H^2}(\T)$. On the other hand, the conjugation $J^\star$ commutes with $M_z$ and keeps the analytic function space $H^2(\T)$ invariant, that is, $J^\star (H^2(\T))\subset H^2(\T)$. As discussed earlier, we will introduce some conjugations in $L^2(\T)$ as a generalization of these two conjugations $J$ and $J^\star$ from a different point of view. Going further, for any inner function $\theta$, the natural conjugation on $K_\theta := H^2\ominus \theta H^2$ is given by
\begin{align*}
	c_\theta:& K_{\theta} \to K_{\theta}\\
	& h\mapsto \overline{zh}\theta,
\end{align*}
and it has a connection with the truncated Toeplitz operator in model spaces. For example, the truncated Toeplitz operator is $c_\theta$- symmetric (for more details, see \cite{GMR2016, GP2005}). 

For any inner function $\theta$, by Wold-type Decomposition \cite{NF2010}, we can view  $L^2(\T)$  as follows:
\begin{equation}\label{L2}
	L^2(\T) =\bigoplus _{k=-\infty}^{\infty}\theta ^k K_\theta .
\end{equation}
The proof of the above decomposition \eqref{L2} is easily obtained from the fact that the corresponding Toeplitz operator $T_\theta $ is a pure isometry and using the decomposition $L^2(\T)=H^2 \oplus \overline{H^2_0}$. For reader convenience, we briefly discuss the proof of the above decomposition. Applying the classical \textbf{Von Neumann-Wold Decomposition} theorem on $H^2$ associated with the shift operator $T_\theta$ we obtain the following:
$H^2 (\T) =\bigoplus\limits _{k=0}^{\infty}\theta ^k K_\theta .$
Therefore, for any $h\in H^2(\T)$, there exists a sequence $\{h_k\}_{k=0}^\infty$ in $K_\theta$ such that $h=\sum\limits_{k=0}^{\infty}h_k\theta^k$ and $\norm{h}^2=\sum\limits_{k=0}^{\infty}\norm{h_k}^2$. Note that, $\theta^k K_\theta \perp \theta^m K_\theta$ for any $k\neq m\in \Z$. As a consequence, we have
$\bigoplus\limits _{k=-\infty}^{\infty}\theta ^k K_\theta \subseteq L^2(\T).$
Let  $f\in L^2(\T)=H^2\oplus \overline{H^2_0}$, then there exist $f_1\in H^2$ and $f_2 \in \overline{H^2_0}$ such that $f=f_1\oplus f_2$. Again, $f_1=\sum\limits_{k=0}^{\infty}\tilde{f}_k\theta^k$, where $\{\tilde{f}_k\}_{k=0}^\infty \in K_\theta$ and $f_2=\overline{zg}$ for some $g\in H^2$. Moreover, we have $g=\sum\limits_{k=0}^{\infty}g_k\theta^k$, where $\{g_k\}_{k=0}^\infty \in K_\theta$ and hence 
\begin{align*}
	f_2  = \sum_{k=0}^{\infty}\overline{zg_k}{\theta}^{-k} =\sum_{k=0}^{\infty}\overline{zg_k}\theta \theta^{-(k+1)}=\sum_{k=0}^{\infty}(c_\theta g_k)\theta^{-(k+1)} 
	&= \sum_{k=-\infty}^{-1}\tilde{g}_k\theta^k, 
\end{align*}
where $\tilde{g}_k=c_\theta g_k \in K_\theta $.  Finally, combining all the above facts, we conclude
$f=\sum\limits_{k=-\infty}^{\infty}f_k\theta^k,$
where $f_k= \tilde{f}_k$ for $k\geq 0$ and $f_k= \tilde{g}_k$ for $k<0$.

Next, we define conjugations extending the natural conjugation $c_\theta$ from the model space $K_\theta$ corresponding to an inner function $\theta$  to the entire Hardy space in two different ways.
First, we define $\mathscr{C}_\theta$ on $L^2(\T)$	as follows:
\begin{align*}
	\mathscr{C}_\theta: & L^2(\T) \to L^2(\T)\\
	& f \mapsto \sum_{k=-\infty}^{\infty}(c_\theta f_k) \theta ^{-k}, \quad \text{where}~ f=\sum_{k=-\infty}^{\infty}f_k\theta ^{k} ,f_k \in K_\theta, ~\text{that is}
\end{align*}
\begin{equation}\label{C1} \mathscr{C}_\theta(\sum_{k=-\infty}^{\infty}f_k\theta ^k )= \sum_{k=-\infty}^{\infty}\overline{zf_k}\theta ^{-k+1}.
\end{equation}
It is easy to check that  $\mathscr{C}_\theta$ is a conjugation on $L^2(\T)$. In particular, if  $\theta (z)=z$, then the action of $\mathscr{C}_z$ is given by
\begin{equation}\label{c_z}
	\mathscr{C}_z(\sum_{k=-\infty}^{\infty}a_kz^k)=\sum_{k=-\infty}^{\infty}\bar{a}_k\bar{z}^k.
\end{equation}
Note that the action of $\mathscr{C}_\theta$ given in \eqref{C1} can also be rewritten as
\begin{equation}\label{conuzsq}
	\mathscr{C}_\theta (f)=\theta \overline{zf},
\end{equation} 
and hence $\mathscr{C}_z(f)=\bar{f}$. In other words, $\mathscr{C}_z=J$, and we can think $\mathscr{C}_\theta $ is a generalization of the conjugation $J$. These types of conjugations \eqref{conuzsq} have already been introduced in \cite{CAMARA2}, where they use it to characterize all  $M_z$-commuting and $M_z$, $M_{\bar{z}}$- intertwining conjugations preserving model spaces.

Now we define another conjugation (different from $\mathscr{C}_\theta$ above) corresponding to an inner function $\theta$, namely $\mathscr{C}_\theta ^\star :L^2(\T)\to L^2(\T)$ such that if $f=\sum\limits_{k=-\infty}^{\infty}f_k\theta ^{k}$ with $f_k \in K_\theta$, then 
$\mathscr{C}_\theta^\star(f)=\sum\limits_{k=-\infty}^{\infty}(c_\theta f_k) \theta ^{k},$
which implies that
\begin{equation}\label{C*}
	\mathscr{C}_\theta^\star(\sum_{k=-\infty}^{\infty}f_k\theta ^k )= \sum_{k=-\infty}^{\infty}\overline{zf_k}\theta ^{k-1}.
\end{equation}
In particular, for $\theta(z)=z$, we have $\mathscr{C}_z ^\star=J^\star$. These types of conjugations can also be found in \cite{CDPS}. 

In the beginning, we have already discussed certain properties of two natural conjugations $J$ and $J^\star$. It is easy to observe that those properties also hold for $\mathscr{C}_\theta$ and $\mathscr{C}_\theta ^\star$. To be more specific, $\mathscr{C}_\theta M_\theta =M_{\bar{\theta}}\mathscr{C}_\theta $, $\mathscr{C}_\theta (H^2)=\overline{H^2}$, $\mathscr{C}_\theta^\star M_\theta =M_\theta \mathscr{C}_\theta^\star$, and $\mathscr{C}_\theta^\star (H^2)=H^2$. Moreover, the conjugation $\mathscr{C}_{z^2}^\star$ has one more extra property, namely $\mathscr{C}_{z^2}^\star M_z \neq M_z \mathscr{C}_{z^2}^\star$ and, in general,  $\mathscr{C}_\theta^\star M_z \neq M_z \mathscr{C}_\theta^\star$ for an inner function $\theta$.

\section{Conjugations Intertwining $M_{z^2}$, $M_{\bar{z}^2}$ and Commuting with $M_{z^2}$ In $L^2$}
The definition of the conjugations $J$, $J^\star$ in $L^2$ (see \eqref{conjueq}) and some of their properties have already been introduced in Section 1. In \cite{CAMARA1,CAMARA2}, the authors obtained the characterization of all conjugations $C$ intertwining the operators $M_z$, $M_{\bar{z}}$ in terms  $J$ in $L^2$ (see \cite[Theorem 2.2]{CAMARA2}).  Moreover, they also characterized all conjugations $C$ that commute with the operator $M_z$ in terms of $J^\star$ in $L^2$ (see \cite[Theorem 2.4]{CAMARA2}). 
Motivated by the characterization results mentioned above, we raise \textbf{Question~\ref{Q1}} and \textbf{Question~\ref{Q2}} in Section 1. Our main aim in this section is to provide appropriate answers of these questions. First, we characterize all conjugations $C$ that satisfies the identity $CM_{z^2}=M_{\bar{z}^2}C$. One of the importance of our Theorem~\ref{th4.1} below is that the class of conjugations $C$ intertwining the operators $M_{z^2}$, $M_{\bar{z}^2}$ in $L^2$ is bigger than the class  of conjugations $C$ intertwining the operators $M_{z}$, $M_{\bar{z}}$ in $L^2$.  Before going to the main theorems, we define $$\phi^e(z)=\dfrac{\phi(z)+\phi(-z)}{2} \quad \text{and}\quad  \phi^o(z)=\dfrac{\phi(z)-\phi(-z)}{2} \quad \text{for}\quad  \phi \in L^2(\T).$$ Note that, $\phi^e \in \bigvee_{k=-\infty}^{\infty}\{z^{2k}\}=\mathcal{H}_1(1,z^2)$ and $\phi^o \in \bigvee_{k=-\infty}^{\infty}\{z^{2k+1}\}=\mathcal{H}_2(z,z^2)$. 
Next, we state and prove one of our main results in this section which involve the conjugation $\mathscr{C}_{z^2}(f)=z\overline{f}$ for $f\in L^2$ (see \eqref{conuzsq}).
\begin{thm}\label{th4.1}
	Let $C:L^2(\T)\to L^2(\T)$ be an arbitrary conjugation  such that $CM_{z^2}=M_{\bar{z}^2}C$, then there exist $\phi_1,\phi_2\in L^\infty$ such that $C= M_{[\phi_1,\phi_2]}\mathscr{C}_{z^2}$, where $\phi_1,\phi_2$ satisfies $|\phi_1|^2+|\phi_2|^2=2$ and $\phi_1^e= \phi_2^e$. Conversely, any conjugation $C$ of the form $M_{[\phi_1,\phi_2]}\mathscr{C}_{z^2}$ for $\phi_1, \phi_2 \in L^\infty$ satisfies $CM_{z^2}=M_{\bar{z}^2}C$.
\end{thm}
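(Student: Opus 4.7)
The plan is to reduce the problem to the commutant characterization of Theorem~\ref{comm} by factoring $C$ through $\mathscr{C}_{z^2}$. For the forward direction, set $T := C\mathscr{C}_{z^2}$; since both factors are antilinear, $T$ is linear. The identity $\mathscr{C}_{z^2}M_{z^2} = M_{\bar z^2}\mathscr{C}_{z^2}$ (recorded at the end of Section~3) together with the hypothesis $CM_{z^2} = M_{\bar z^2}C$ (which also gives $CM_{\bar z^2} = M_{z^2}C$ by conjugating with $C$ and using $C^2=I$) yields
\[
TM_{z^2} = C\mathscr{C}_{z^2}M_{z^2} = CM_{\bar z^2}\mathscr{C}_{z^2} = M_{z^2}C\mathscr{C}_{z^2} = M_{z^2}T.
\]
Hence $T\in\{M_{z^2}\}'$, and Remark~\ref{mzcomrem} supplies $\phi_1,\phi_2\in L^\infty$ with $T = M_{[\phi_1,\phi_2]}$, so $C = M_{[\phi_1,\phi_2]}\mathscr{C}_{z^2}$.

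The heart of the proof is then extracting the two scalar identities $\phi_1^e = \phi_2^e$ and $|\phi_1|^2+|\phi_2|^2 = 2$ from the conjugation conditions $C^2 = I$ and $\|Cf\|=\|f\|$. Set $a=\phi_1^e$, $u=\phi_1^o$, $b=\phi_2^e$, $v=\phi_2^o$. Since $\mathscr{C}_{z^2}f = z\bar f$, a direct parity check gives $(Cf)^e = z(a\overline{f^o}+v\overline{f^e})$ and $(Cf)^o = z(u\overline{f^o}+b\overline{f^e})$. Iterating once more and splitting again by parity, one finds that $C^2 f$ has even part $(a\bar b+|v|^2)f^e + (a\bar u+v\bar a)f^o$ and odd part $(u\bar b+b\bar v)f^e + (|u|^2+b\bar a)f^o$. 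Demanding $C^2 f = f$ on arbitrary $f^e\in\mathcal{H}_1(1,z^2)$ and $f^o\in\mathcal{H}_2(z,z^2)$ yields the four identities
\[
a\bar b + |v|^2 = 1,\qquad a\bar u + v\bar a = 0,\qquad u\bar b + b\bar v = 0,\qquad |u|^2 + b\bar a = 1.
\]
Expanding $\|Cf\|^2$ using the orthogonal decomposition $L^2 = \mathcal{H}_1\oplus\mathcal{H}_2$, and testing $\|Cf\|^2 = \|f\|^2$ on $f\in\mathcal{H}_1$, $f\in\mathcal{H}_2$, and mixed pairs $\alpha z^{2m}+\beta z^{2n+1}$, produces the three isometry-type identities $|a|^2+|u|^2 = 1$, $|b|^2+|v|^2 = 1$, and $a\bar v+u\bar b = 0$. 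Subtracting $|b|^2+|v|^2=1$ from $a\bar b+|v|^2=1$ gives $\bar b(a-b)=0$, and symmetrically subtracting $|a|^2+|u|^2=1$ from $|u|^2+b\bar a=1$ gives $\bar a(a-b)=0$; these force $a=b$ a.e., i.e.\ $\phi_1^e = \phi_2^e$. Substituting $b=a$ and rearranging $a\bar u + v\bar a = 0$ as $a\bar u = -\overline{a\bar v}$ shows $\mathrm{Re}(a\bar u)+\mathrm{Re}(a\bar v)=0$, whence
\[
|\phi_1|^2+|\phi_2|^2 = \bigl(1+2\mathrm{Re}(a\bar u)\bigr) + \bigl(1+2\mathrm{Re}(a\bar v)\bigr) = 2.
\]

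The converse is a short direct verification: $\mathscr{C}_{z^2}M_{z^2} = M_{\bar z^2}\mathscr{C}_{z^2}$ from Section~3, and the observation that $M_{\bar z^2}$ preserves each of $\mathcal{H}_1,\mathcal{H}_2$ (so it commutes with every $M_{[\phi_1,\phi_2]}$) together give $CM_{z^2} = M_{[\phi_1,\phi_2]}M_{\bar z^2}\mathscr{C}_{z^2} = M_{\bar z^2}M_{[\phi_1,\phi_2]}\mathscr{C}_{z^2} = M_{\bar z^2}C$. The main obstacle is the systematic even/odd bookkeeping in the middle step: one must carefully track which parity class each product $a\overline{f^o},v\overline{f^e},\ldots$ and each cross-term in $|Cf|^2$ belongs to in order to isolate the seven scalar relations above, after which the two stated conditions drop out by two simple subtractions and one reality argument.
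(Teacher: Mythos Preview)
Your proof is correct and follows essentially the same strategy as the paper: factor $C$ through $\mathscr{C}_{z^2}$ to produce a bounded linear operator in $\{M_{z^2}\}'$, invoke Theorem~\ref{comm} to write it as $M_{[\phi_1,\phi_2]}$, and then extract the two scalar conditions from the conjugation axioms. The only tactical difference is that the paper packages the isometry step via the $2\times 2$ block-matrix form of $M_{[\phi_1,\phi_2]}$ on $\mathcal{H}_1\oplus\mathcal{H}_2$ and reads the conditions off $M_{[\phi_1,\phi_2]}^*M_{[\phi_1,\phi_2]}=I$, while the involution step is handled via the operator identity $\mathscr{C}_{z^2}M_{[\phi_1,\phi_2]}\mathscr{C}_{z^2}=M_{[\phi_1,\phi_2]}^*$ evaluated at $1$ and $z$; you instead expand $C^2f$ and $\|Cf\|^2$ directly in parity components. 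Both routes produce the same seven identities and the same two-line deduction of $\phi_1^e=\phi_2^e$ and $|\phi_1|^2+|\phi_2|^2=2$.
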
 
\begin{proof}
	Suppose $C$ is a conjugation on $L^2(\T)$ such that $CM_{z^2}=M_{\bar{z}^2}C$. Then, it is easy to observe that the unitary operator $C\circ\mathscr{C}_{z^2}$ commutes with $M_{z^2}$. Therefore by applying Theorem \ref{comm}, we have 
	\begin{equation}\label{symuniop}
		C\circ\mathscr{C}_{z^2}=  M_{[\phi_1,\phi_2]} \quad \text{for}\quad  \phi_1,\phi_2 \in L^\infty(\T)
	\end{equation} 
	and hence $C = M_{[\phi_1,\phi_2]} \mathscr{C}_{z^2} $. By considering the decomposition $L^2(\T)=\hcl_1(1,z^2) \oplus \hcl_2(z,z^2)$, we have the following block-matrix representation of $M_{[\phi_1,\phi_2]}$
	
	\begin{equation*}
		M_{[\phi_1,\phi_2]}= 
		\begin{bmatrix}  
			M_{\phi_1^e} && M_{\phi_2^o}\\
			\\ 
			M_{\phi_1^o} && M_{\phi_2^e}\\
		\end{bmatrix} \text{~and, ~hence~}
		M_{[\phi_1,\phi_2]}^*= 
		\begin{bmatrix}  
			M_{\bar{\phi_1^e}} && M_{\bar{\phi_1^o}}\\
			\\ 
			M_{\bar{\phi_2^0}} && M_{\bar{\phi_2^e}}\\
		\end{bmatrix}.  
	\end{equation*}
	From \eqref{symuniop} it follows that $M_{[\phi_1,\phi_2]}$ is a unitary operator, and therefore we obtain the following conditions on the symbols.
	\begin{equation}\label{eq-im}
		\begin{cases*}
			|\phi_1^e|^2+|\phi_1^o|^2=1 ~,~ |\phi_1^e|^2+|\phi_1^o|^2=1,\\
			|\phi_1^e|^2+|\phi_2^o|^2=1 ~,~ |\phi_1^o|^2 + |\phi_2^e|^2=1,\\
			\bar{\phi_1^e}\phi_2^o+\bar{\phi_1^o}\phi_2^e=0 ~,~ \bar{\phi_1^o}\phi_1^e+\bar{\phi_2^e}\phi_2^o=0,
		\end{cases*}
	\end{equation}
	which by simplifying, reduces to 
	\begin{equation}\label{cond4.1}
		\begin{cases*}
			|\phi_1|^2+|\phi_2|^2=2\\
			|\phi_1^o+\phi_2^e|^2+|\phi_1^e+\phi_2^o|^2=2.
		\end{cases*}
	\end{equation}
	Moreover, due to the involutive property of the conjugation $C$, we have
	\begin{equation*}
		M_{[\phi_1,\phi_2]} \mathscr{C}_{z^2}M_{[\phi_1,\phi_2]} \mathscr{C}_{z^2}  =  I\implies \mathscr{C}_{z^2}M_{[\phi_1,\phi_2]} \mathscr{C}_{z^2}   =   M_{[\phi_1,\phi_2]}^*,
	\end{equation*}
	which by applying on the basis elements $1$ or $z$ yields $\phi_1^e= \phi_2^e$. Therefore, the conditions in \eqref{cond4.1} merged to $|\phi_1|^2+|\phi_2|^2=2$. 
	
	For the converse, suppose $C=M_{[\phi_1,\phi_2]}\mathscr{C}_{z^2}$ for $\phi_1, \phi_2 \in L^\infty$. Then for $f\in L^2(\T)$, there exist $f^e \in \hcl_1(1,z^2)$ and $f^o \in \hcl_2(z,z^2)$ such that $f=f^e\oplus f^o$. Since $\mathscr{C}_{z^2}(f)= z\overline{f}$, then we have
	\begin{equation*}
		CM_{z^2}(f)=C(z^2f^e\oplus z^2f^o)= M_{[\phi_1,\phi_2]}\mathscr{C}_{z^2}(z^2f^e\oplus z^2f^o) =M_{[\phi_1,\phi_2]}(\overline{zf^o}\oplus\overline{zf^e})=\phi_1\overline{zf^o}+\phi_2\overline{zf^e}.
	\end{equation*}
	Similarly, we obtain $M_{\bar{z}^2}C(f)=M_{\bar{z}^2}M_{[\phi_1,\phi_2]}(z\overline{f^o}\oplus z\overline{f^e}) = \phi_1\overline{zf^o}+\phi_2\overline{zf^e}$. This completes the proof.
\end{proof}
As a consequence of the above theorem, we characterize all conjugations $C$ intertwining the operators $M_{z^2}$, $M_{\bar{z}^2}$ in $L^2$ in terms of $J$ as follows.
\begin{crlre}\label{cr4.1}
	Let $C$ be a conjugation on $L^2$ such that $CM_{z^2}=M_{\bar{z}^2}C$, then $C=M_{[\psi_1,\psi_2]}J$ for  $\psi_1,\psi_2\in L^\infty$ with $\psi_1^o= \psi_2^o$ and $|\psi_1|^2+|\psi_2|^2=2$. Conversely, any conjugation $C$ of the form $M_{[\psi_1,\psi_2]}J$ for $\psi_1,\psi_2 \in L^\infty$ satisfies $CM_{z^2}=M_{\bar{z}^2}C$.
\end{crlre}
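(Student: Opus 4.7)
The plan is to derive this corollary directly from Theorem~\ref{th4.1} by re-expressing the factor $\mathscr{C}_{z^2}$ in terms of $J$. From the formula \eqref{conuzsq} we have $\mathscr{C}_{z^2}(f)=z^2\overline{zf}=z\bar f$ (using $|z|=1$ on $\T$), so as antilinear operators $\mathscr{C}_{z^2}=M_z J$ (apply $J$ first, then multiply by $z$). Thus, starting from the representation $C=M_{[\phi_1,\phi_2]}\mathscr{C}_{z^2}$ given by Theorem~\ref{th4.1}, I would rewrite
\[
C \;=\; M_{[\phi_1,\phi_2]} M_z\, J,
\]
and the remaining task is to recognize $M_{[\phi_1,\phi_2]} M_z$ as an operator of the form $M_{[\psi_1,\psi_2]}$.

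The key observation is that $M_z$ interchanges the two summands in $L^2(\T)=\hcl_1(1,z^2)\oplus\hcl_2(z,z^2)$: for $f=f^e\oplus f^o$ we have $M_z f=(zf^o)\oplus(zf^e)$ with $zf^o\in\hcl_1$ and $zf^e\in\hcl_2$. Applying $M_{[\phi_1,\phi_2]}$ yields $\phi_1(zf^o)+\phi_2(zf^e)=M_{[z\phi_2,\,z\phi_1]}(f)$, so
\[
M_{[\phi_1,\phi_2]}\,M_z \;=\; M_{[z\phi_2,\,z\phi_1]}.
\]
Setting $\psi_1:=z\phi_2$, $\psi_2:=z\phi_1$ (both in $L^\infty$ since $|z|=1$ on $\T$) gives $C=M_{[\psi_1,\psi_2]}J$.

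Next I would translate the two conditions of Theorem~\ref{th4.1} into the required conditions on $\psi_1,\psi_2$. The norm identity is immediate: $|\psi_1|^2+|\psi_2|^2=|\phi_1|^2+|\phi_2|^2=2$. For the parity condition, expanding $\psi_1(z)=z\phi_2(z)$ gives
\[
\psi_1^o(z)=\frac{z\phi_2(z)+z\phi_2(-z)}{2}=z\,\phi_2^e(z),
\]
and similarly $\psi_2^o(z)=z\,\phi_1^e(z)$. Hence $\phi_1^e=\phi_2^e$ is equivalent to $\psi_1^o=\psi_2^o$.

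For the converse, given $\psi_1,\psi_2\in L^\infty$ satisfying these two properties, I would set $\phi_1:=\bar z\psi_2$, $\phi_2:=\bar z\psi_1\in L^\infty$, check by the same computations that the pair $(\phi_1,\phi_2)$ satisfies the hypotheses of the converse direction of Theorem~\ref{th4.1}, and then unravel the identity $C=M_{[\psi_1,\psi_2]}J=M_{[\phi_1,\phi_2]}\mathscr{C}_{z^2}$ to conclude both that $C$ is a conjugation and that it intertwines $M_{z^2}$ with $M_{\bar z^2}$. The only point requiring care is to keep the even/odd bookkeeping straight under multiplication by $z$ (since $z$ swaps $\hcl_1(1,z^2)$ and $\hcl_2(z,z^2)$); once this is handled, the corollary is a mechanical reformulation of Theorem~\ref{th4.1}.
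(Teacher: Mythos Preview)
Your proposal is correct and follows essentially the same route as the paper: both use $\mathscr{C}_{z^2}=M_zJ$, the identity $M_{[\phi_1,\phi_2]}M_z=M_{[z\phi_2,\,z\phi_1]}$, and the substitution $\psi_1=z\phi_2$, $\psi_2=z\phi_1$ to reduce to Theorem~\ref{th4.1}. The only cosmetic difference is that the paper re-invokes the involutive property of $C$ to obtain $\psi_1^o=\psi_2^o$, whereas you (equivalently and more transparently) derive it from the condition $\phi_1^e=\phi_2^e$ of Theorem~\ref{th4.1} via the parity bookkeeping.
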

\begin{proof}
	By definition of $\mathscr{C}_{z^2}$ it follows that $\mathscr{C}_{z^2} = M_z J$.  Therefore by using Theorem \ref{th4.1}, we conclude $C=M_{[\phi_1,\phi_2]} \mathscr{C}_{z^2}$, where $\phi_1^e= \phi_2^e$ and $|\phi_1|^2+|\phi_2|^2=2$. Note that $M_{[\phi_1,\phi_2]}M_z= M_{[z\phi_2,z\phi_1]}$ and hence $C=M_{[\psi_1,\psi_2]}J$, where $\psi_1 (=z\phi_2) , \psi_2 (= z\phi_1)\in L^\infty$. Moreover, the involutive property of the conjugation $C$ implies $\psi_1^o= \psi_2^o$, and
	since $M_{[\psi_1,\psi_2]}$ is unitary, then we have  $|\psi_1|^2+|\psi_2|^2=2$. The converse is straightforward.
\end{proof}

\begin{rmrk}\label{rem-4.5}
	Note that, the  conditions \eqref{eq-im} in Theorem \ref{th4.1} after substituting $\mathscr{C}_{z^2} = M_z J$ and $\psi_1 =z\phi_2 , \psi_2= z\phi_1$, change to 
	\begin{equation}\label{im-co}
		\begin{cases*}
			|\psi_2^o|^2+|\psi_2^e|^2=1 ~,~ |\psi_2^o|^2+|\psi_2^e|^2=1,\\
			|\psi_2^o|^2+|\psi_1^e|^2=1 ~,~ |\psi_2^e|^2 + |\psi_1^o|^2=1,\\
			\bar{\psi_2^o}\psi_1^e+\bar{\psi_2^e}\psi_1^o=0 ~,~ \bar{\psi_2^e}\psi_2^o+\bar{\psi_1^o}\psi_1^e=0.
		\end{cases*}
	\end{equation}	
	It should be noted that if we combine the above set of conditions, it reduces to the single condition $|\psi_1|^2+|\psi_2|^2=2$ obtained in Corollary \ref{cr4.1}. The usefulness of the conditions in \eqref{im-co} will be realized in a later section.  
	
\end{rmrk}

\begin{rmrk}\label{rem-IMP}
	It is important to note that if we assume $\psi_1 =\psi_2$ in Corollary\eqref{cr4.1}, then $C=M_{\psi_1}J$ intertwining the operators $M_z$, $M_{\bar{z}}$ and hence intertwining the operators $M_{z^2}$, $M_{\bar{z}^2}$ as well in $L^2$. On the other hand, if $\psi_1 \neq \psi_2$, then $C=M_{[\psi_1,\psi_2]}J$ intertwining the operators $M_{z^2}$, $M_{\bar{z}^2}$ but not intertwining the operators $M_z$ and $M_{\bar{z}}$ in $L^2$.
\end{rmrk}

According to the previous discussion, our next aim is to characterize all the conjugations commuting with $M_{z^2}$. For instance, the conjugation $\mathscr{C}_{z^2}^\star$ (see \eqref{C*}) commutes with $M_{z^2}$. To be more specific, for $f(z)=\sum\limits_{k=-\infty}^{\infty}a_nz^n\in L^2$, the action of $\mathscr{C}_{z^2}^\star$ is given by 
\begin{align*}
	\mathscr{C}_{z^2}^\star(\sum_{k=-\infty}^{\infty}a_nz^n)&=\sum_{k=-\infty}^{\infty}\bar{a}_{2n}z^{2n+1} + \sum_{k=-\infty}^{\infty}\bar{a}_{2n+1}z^{2n}~\implies~
	\mathscr{C}_{z^2}^\star(f(z))  = z\overline{f^e(\bar{z})} +\bar{z}\overline{f^o(\bar{z})}.
\end{align*}
Now we have the following theorem concerning $M_{z^2}$-commuting conjugations.
\begin{thm}\label{th4.2}
	If $C$ is a conjugation on $L^2(\T)$ such that $CM_{z^2}=M_{z^2}C$, then there exist $\zeta_1,\zeta_2 \in L^\infty$ satisfying $|\zeta_1|^2+|\zeta_2|^2=2,
	|\bar{z}\zeta_1^o+z\zeta_2^e|^2+|\bar{z}\zeta_1^e+z\zeta_2^o|^2=2, \zeta_2^e(\bar{z})=\zeta_1^e(z), \zeta_2^o(\bar{z})=z^2\zeta_2^0(z), z^2\zeta_1^o(\bar{z})=\zeta_1^o(z)$ such that
	$$C=M_{[\zeta_1,\zeta_2]}\mathscr{C}_{z^2}^\star.$$
	Conversely, any conjugation $C$ of the form $M_{[\zeta_1,\zeta_2]}\mathscr{C}_{z^2}^\star$ for  $\zeta_1,\zeta_2 \in L^\infty$ commutes with $M_{z^2}$.	
\end{thm}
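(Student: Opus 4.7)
The plan is to follow the template of Theorem~\ref{th4.1}, with $\mathscr{C}_{z^2}$ replaced by $\mathscr{C}_{z^2}^\star$. Since $\mathscr{C}_{z^2}^\star$ commutes with $M_{z^2}$ (noted at the end of Section~3), the composition $U := C\circ \mathscr{C}_{z^2}^\star$ is a unitary operator on $L^2(\T)$ that commutes with $M_{z^2}$. Applying Theorem~\ref{comm} with $B(z)=z^2$ yields $\zeta_1,\zeta_2\in L^\infty$ with $U = M_{[\zeta_1,\zeta_2]}$, and hence $C = M_{[\zeta_1,\zeta_2]}\mathscr{C}_{z^2}^\star$. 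It remains to extract the five side conditions.

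For the two magnitude conditions I would use the block-matrix representation of $M_{[\zeta_1,\zeta_2]}$ relative to $L^2(\T)=\hcl_1(1,z^2)\oplus\hcl_2(z,z^2)$, exactly as in Theorem~\ref{th4.1}. Unitarity produces the six identities \eqref{eq-im} with $\phi_i$ replaced by $\zeta_i$. Summing the diagonal entries of $M_{[\zeta_1,\zeta_2]}^*M_{[\zeta_1,\zeta_2]}=I$ gives $|\zeta_1|^2+|\zeta_2|^2=2$. A direct expansion of $|\bar z\zeta_1^o+z\zeta_2^e|^2+|\bar z\zeta_1^e+z\zeta_2^o|^2$ on $\T$ produces $|\zeta_1|^2+|\zeta_2|^2$ plus cross-terms of the form $\bar z^2(\zeta_1^o\overline{\zeta_2^e}+\zeta_1^e\overline{\zeta_2^o})+z^2(\zeta_2^e\overline{\zeta_1^o}+\zeta_2^o\overline{\zeta_1^e})$, each of which vanishes by the off-diagonal entries of \eqref{eq-im} together with their conjugates, so the sum equals $2$.

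The most delicate step is extracting the three pointwise relations from the involution $C^2=I$, which is equivalent to $\mathscr{C}_{z^2}^\star M_{[\zeta_1,\zeta_2]}\mathscr{C}_{z^2}^\star = M_{[\zeta_1,\zeta_2]}^*$. I would test this identity on the two anchor basis elements $1\in\hcl_1$ and $z\in\hcl_2$, using the values $\mathscr{C}_{z^2}^\star(1)=z$, $\mathscr{C}_{z^2}^\star(z)=1$ together with the explicit formula $\mathscr{C}_{z^2}^\star(g)(z)=z\overline{g^e(\bar z)}+\bar z\overline{g^o(\bar z)}$ derived just before the statement. For the $1$-identity, computing the left-hand side as $\mathscr{C}_{z^2}^\star(M_{[\zeta_1,\zeta_2]}(z))=\mathscr{C}_{z^2}^\star(\zeta_2\cdot z)=z^2\overline{\zeta_2^o(\bar z)}+\overline{\zeta_2^e(\bar z)}$ and the right-hand side as $M_{[\zeta_1,\zeta_2]}^*(1)=\overline{\zeta_1^e(z)}+\overline{\zeta_2^o(z)}$, and then matching $\hcl_1$- and $\hcl_2$-components, produces $\zeta_2^e(\bar z)=\zeta_1^e(z)$ and $\zeta_2^o(\bar z)=z^2\zeta_2^o(z)$. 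The parallel calculation applied to $z$ yields the remaining relation $z^2\zeta_1^o(\bar z)=\zeta_1^o(z)$.

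The converse is immediate: $M_{[\zeta_1,\zeta_2]}$ commutes with $M_{z^2}$ by Theorem~\ref{comm} and $\mathscr{C}_{z^2}^\star$ commutes with $M_{z^2}$ by construction, so their composition $C$ commutes with $M_{z^2}$ without any of the side conditions being invoked. The principal obstacle in the forward direction is the parity bookkeeping in the involution step: the operations $f\mapsto f\cdot z$, $f(z)\mapsto f(\bar z)$, and $f\mapsto \overline{f}$ each interact non-trivially with the decomposition $\hcl_1(1,z^2)\oplus\hcl_2(z,z^2)$, and the identifications $\overline{\bar z}=z$ and $z\bar z=1$ on $\T$ must be applied carefully at each step to read off the stated forms of the three conditions.
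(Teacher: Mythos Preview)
Your proposal is correct and follows essentially the same route as the paper: compose with $\mathscr{C}_{z^2}^\star$ to obtain a unitary in $\{M_{z^2}\}'$, invoke Theorem~\ref{comm}, read off the magnitude conditions from the block-matrix unitarity relations, and extract the three pointwise identities by evaluating $\mathscr{C}_{z^2}^\star M_{[\zeta_1,\zeta_2]}\mathscr{C}_{z^2}^\star=M_{[\zeta_1,\zeta_2]}^*$ on $1$ and $z$. Your write-up actually supplies more detail than the paper does, in particular the explicit verification that the cross-terms in $|\bar z\zeta_1^o+z\zeta_2^e|^2+|\bar z\zeta_1^e+z\zeta_2^o|^2$ vanish via the off-diagonal unitarity relation, and the component-by-component matching in the involution step.
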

\begin{proof}
	Let $C$ be a conjugation on $L^2(\T)$ such that $CM_{z^2}=M_{z^2}C$. Since $\mathscr{C}_{z^2}^\star M_{z^2}=M_{z^2}\mathscr{C}_{z^2}^\star$, then the unitary operator $C\circ \mathscr{C}_{z^2}^\star$ also commutes with $M_{z^2}$ in $L^2(\T)$ . Therefore by applying Theorem \ref{comm}, there exist $\zeta_1,\zeta_2\in L^\infty(\T)$ such that $C\circ \mathscr{C}_{z^2}^\star = M_{[\zeta_1,\zeta_2]}$.  Now by considering the decomposition $L^2(\T)=\hcl_1(1,z^2) \oplus \hcl_2(z,z^2)$, we have the following block-matrix representation of $M_{[\zeta_1,\zeta_2]}$ 
	\begin{equation*}
		M_{[\zeta_1,\zeta_2]}= 
		\begin{bmatrix}  
			M_{\zeta_1^e} && M_{\zeta_2^o}\\
			\\ 
			M_{\zeta_1^o} && M_{\zeta_2^e}\\
		\end{bmatrix} \text{~and, ~hence~}
		M_{[\zeta_1,\zeta_2]}^*= 
		\begin{bmatrix}  
			M_{\bar{\zeta_1^e}} && M_{\bar{\zeta_1^o}}\\
			\\ 
			M_{\bar{\zeta_2^0}} && M_{\bar{\zeta_2^e}}\\
		\end{bmatrix}.  
	\end{equation*}
	Using the fact that $M_{[\zeta_1,\zeta_2]}$ is a unitary operator we obtain the following conditions
	\begin{equation}\label{Im-cond}
		\begin{cases*}
			|\zeta_1^e|^2+|\zeta_1^o|^2=1 ~,~ |\zeta_1^e|^2+|\zeta_1^o|^2=1,\\
			|\zeta_1^e|^2+|\zeta_2^o|^2=1 ~,~ |\zeta_1^o|^2 + |\zeta_2^e|^2=1,\\
			\bar{\zeta_1^e}\zeta_2^o+\bar{\zeta_1^o}\zeta_2^e=0 ~,~ \bar{\zeta_1^o}\zeta_1^e+\bar{\zeta_2^e}\zeta_2^o=0,
		\end{cases*}
	\end{equation}	
	which by simplifying, becomes
	\begin{equation*}
		\begin{cases*}
			|\zeta_1|^2+|\zeta_2|^2=2,\\
			|\bar{z}\zeta_1^o+z\zeta_2^e|^2+|\bar{z}\zeta_1^e+z\zeta_2^o|^2=2.
		\end{cases*}
	\end{equation*}
	Moreover, the involutive property of the conjugation $C$ implies  
	\begin{equation*}
		\mathscr{C}_{z^2}^\star M_{[\zeta_1,\zeta_2]} \mathscr{C}_{z^2}^\star  =   M_{[\zeta_1,\zeta_2]}^*,
	\end{equation*}
	which by applying on the basis vectors $1$ and $z$, we get
	$\zeta_2^e(\bar{z})=\zeta_1^e(z)$, $\zeta_2^o(\bar{z})=z^2\zeta_2^0(z)$  and 
	$z^2\zeta_1^o(\bar{z})=\zeta_1^o(z)$, $\zeta_1^e(\bar{z}) =\zeta_2^e(z)$ respectively.
	
	Conversely, if $C=M_{[\zeta_1,\zeta_2]}\mathscr{C}_{z^2}^\star$ for  $\zeta_1,\zeta_2 \in L^\infty$ , then it is easy to verify that $CM_{z^2}=M_{z^2}C$. This completes the proof.
\end{proof}

\begin{rmrk}
	It is interesting to note that the conjugation $\mathscr{C}_{z^2}^\star$ commutes with $M_{z^2}$ but  not with $M_z$. But in the above Theorem~\ref{th4.2}, we characterize $M_{z^2}$-commuting conjugations in terms of $M_{[\zeta_1,\zeta_2]}$ and $\mathscr{C}_{z^2}^\star$, where the composition operator $M_{[\zeta_1,\zeta_2]}\mathscr{C}_{z^2}^\star$ may commutes with $M_z$. 
\end{rmrk}	
In general, it is quite difficult to distinguish between the class of conjugations commuting with $M_z$ and the class of conjugations not commuting with $M_z$ inside $M_{z^2}$-commuting conjugations. To get a better understanding of the same,  we revisit the study of $M_{z^2}$-commuting conjugations in the following corollary.

\begin{crlre}\label{cr4.2}
	Any $ M_{z^2}$-commuting  conjugation  $C$ on $L^2$, there exist two $L^\infty$-functions $\xi_1,\xi_2$ on $\T$ such that $C=M_{[\xi_1,\xi_2]}J^\star$, where $\xi_1$ and $\xi_2$ satisfy $|\xi_1|^2+|\xi_2|^2=2,|\xi_2^e+\xi_1^o|^2+| \xi_2^o+\xi_1^e|^2=2,\xi_1^o(\bar{z})=\xi_2^o(z), \xi_1^e(\bar{z})=\xi_1^e(z), \text{and}~ \xi_2^e(\bar{z})=\xi_2^e(z)$. Conversely, any conjugation $C$ of the form $M_{[\xi_1,\xi_2]}J^\star$ for  $\xi_1,\xi_2 \in L^\infty $ commutes with $M_{z^2}$.
\end{crlre}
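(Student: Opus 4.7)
The plan is to deduce this corollary from Theorem \ref{th4.2} by changing the reference conjugation from $\mathscr{C}_{z^2}^\star$ to $J^\star$, mimicking the passage from Theorem \ref{th4.1} to Corollary \ref{cr4.1}. The key identity I need is the factorization $\mathscr{C}_{z^2}^\star = M_{[z,\bar z]} J^\star$. To see this, I would write $f = \sum a_n z^n$ and compare the explicit formula for $\mathscr{C}_{z^2}^\star$ given just before Theorem \ref{th4.2} with $J^\star f = \sum \overline{a_n} z^n$; the former is obtained from the latter by multiplying the even-indexed part by $z$ and the odd-indexed part by $\bar z$, which is exactly how $M_{[z,\bar z]}$ acts on the decomposition $L^2(\T) = \mathcal{H}_1(1,z^2) \oplus \mathcal{H}_2(z,z^2)$.

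Next, I would establish a small product rule for operators of the form $M_{[\cdot,\cdot]}$. Expanding each symbol into its even and odd parts and tracking which summand each product lands in gives
\[ M_{[\phi_1,\phi_2]} M_{[\psi_1,\psi_2]} = M_{[\phi_1 \psi_1^e + \phi_2 \psi_1^o,\, \phi_1 \psi_2^o + \phi_2 \psi_2^e]}. \]
Applied with $\psi_1 = z$, $\psi_2 = \bar z$, where $\psi_1^e = \psi_2^e = 0$, $\psi_1^o = z$, $\psi_2^o = \bar z$, this collapses to $M_{[\zeta_1,\zeta_2]} M_{[z,\bar z]} = M_{[z\zeta_2,\,\bar z \zeta_1]}$. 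Combined with Theorem \ref{th4.2}, any $M_{z^2}$-commuting conjugation $C$ then takes the form
\[ C = M_{[\zeta_1,\zeta_2]}\mathscr{C}_{z^2}^\star = M_{[\zeta_1,\zeta_2]} M_{[z,\bar z]} J^\star = M_{[\xi_1,\xi_2]} J^\star, \]
where $\xi_1 := z\zeta_2 \in L^\infty(\T)$ and $\xi_2 := \bar z \zeta_1 \in L^\infty(\T)$.

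Finally, I would translate the five conditions on $(\zeta_1,\zeta_2)$ from Theorem \ref{th4.2} into the five stated conditions on $(\xi_1,\xi_2)$, using the parity identities $\xi_1^e = z\zeta_2^o$, $\xi_1^o = z\zeta_2^e$, $\xi_2^e = \bar z \zeta_1^o$, $\xi_2^o = \bar z \zeta_1^e$. Substituting: $|\zeta_1|^2+|\zeta_2|^2 = 2$ is preserved; $|\bar z \zeta_1^o + z\zeta_2^e|^2 + |\bar z \zeta_1^e + z\zeta_2^o|^2 = 2$ becomes $|\xi_2^e+\xi_1^o|^2 + |\xi_2^o+\xi_1^e|^2 = 2$; the functional equations $\zeta_2^e(\bar z) = \zeta_1^e(z)$, $\zeta_2^o(\bar z) = z^2 \zeta_2^o(z)$, and $z^2\zeta_1^o(\bar z) = \zeta_1^o(z)$ rewrite as $\xi_1^o(\bar z) = \xi_2^o(z)$, $\xi_1^e(\bar z) = \xi_1^e(z)$, and $\xi_2^e(\bar z) = \xi_2^e(z)$, respectively (using on $\T$ that $\overline z = z^{-1}$). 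The converse direction is immediate: given $\xi_1,\xi_2 \in L^\infty$, reversing the substitution recovers admissible $\zeta_1,\zeta_2$ so that Theorem \ref{th4.2}'s sufficient half yields $M_{z^2} C = C M_{z^2}$. The only real obstacle is bookkeeping — verifying that each of the three functional equations in Theorem \ref{th4.2} transforms into exactly one of the stated conditions on $(\xi_1,\xi_2)$ without generating spurious constraints — but no genuinely new idea beyond the factorization $\mathscr{C}_{z^2}^\star = M_{[z,\bar z]} J^\star$ and the product rule is needed.
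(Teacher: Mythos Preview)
Your proposal is correct and follows essentially the same route as the paper: both use Theorem~\ref{th4.2}, the factorization $\mathscr{C}_{z^2}^\star = M_{[z,\bar z]} J^\star$, the substitution $\xi_1 = z\zeta_2$, $\xi_2 = \bar z\zeta_1$, and then translate the five conditions via the same parity identities. The only cosmetic difference is that you state the general product rule for $M_{[\cdot,\cdot]}$ explicitly, whereas the paper just writes down the result of the particular product $M_{[\zeta_1,\zeta_2]}M_{[z,\bar z]}$ directly.
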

\begin{proof}
	Let $C:L^2(\T)\to L^2(\T)$ be a conjugation  such that $CM_{z^2}=M_{z^2}C$. Then by using the above Theorem \ref{th4.2}, we get two $L^\infty$-functions $\zeta_1,\zeta_2$ such that $C= M_{[\zeta_1,\zeta_2]}\mathscr{C}_{z^2}^\star$, and
	\begin{equation}\label{eq4.2}
		\begin{cases}
			|\zeta_1|^2+|\zeta_2|^2=2,
			|\bar{z}\zeta_1^o+z\zeta_2^e|^2+|\bar{z}\zeta_1^e+z\zeta_2^o|^2=2,\\
			\zeta_2^e(\bar{z})=\zeta_1^e(z), \zeta_2^o(\bar{z})=z^2\zeta_2^0(z), z^2\zeta_1^o(\bar{z})=\zeta_1^o(z).
		\end{cases}
	\end{equation} 
	Moreover,  for any $f(=f^e\oplus f^o)\in L^2$ and $z\in \T$, we have $$(\mathscr{C}_{z^2}^\star f) (z)= z\overline{f^e(\bar{z})}+\bar{z}\overline{f^o(\bar{z})}=\bar{z}\overline{f^o(\bar{z})} \oplus z\overline{f^e(\bar{z})} =(M_{[z,\bar{z}]}J^\star f)(z) ,$$
	and hence $\mathscr{C}_{z^2}^\star = M_{[z,\bar{z}]}J^\star $ which immediately implies
	\begin{equation}\label{eq4.3}
		C=M_{[\zeta_1,\zeta_2]}M_{[z,\bar{z}]}J^\star=M_{[\xi_1,\xi_2]}J^\star,
	\end{equation}
	where $\xi_1=z\zeta_2$  and $\xi_2=\bar{z}\zeta_1$. Next by identifying the relation between $\{\zeta_1,\zeta_2\}$ and $\{\xi_1,\xi_2\}$ we obtain
	\begin{equation}\label{eq4.4}
		\begin{cases}
			\zeta_1^e = z\xi_2^o ,~ \zeta_1^o = z\xi_2^e,\\
			\zeta_2^e = \bar{z}\xi_1^o ,~ \zeta_2^o = \bar{z}\xi_1^e.
		\end{cases}
	\end{equation}
	Thus the conditions in \eqref{eq4.2} becomes
	\begin{equation}\label{eq4.5}
		\begin{cases}
			|\xi_1|^2+|\xi_2|^2=2,  |\xi_2^e+\xi_1^o|^2+| \xi_2^o+\xi_1^e|^2=2, \\
			z\xi_1^o(\bar{z})=z\xi_2^o(z), z\xi_1^e(\bar{z})=z^2\bar{z}\xi_1^e(z), z^2\bar{z}\xi_2^e(\bar{z})=z\xi_2^e(z),
		\end{cases}
	\end{equation}
	which by simple modification reduces to 
	\begin{equation}\label{cond4.11}
		\begin{cases}
			|\xi_1|^2+|\xi_2|^2=2,|\xi_2^e+\xi_1^o|^2+| \xi_2^o+\xi_1^e|^2=2, \\
			\xi_1^o(\bar{z})=\xi_2^o(z), \xi_1^e(\bar{z})=\xi_1^e(z), \xi_2^e(\bar{z})=\xi_2^e(z).
		\end{cases}
	\end{equation}
	For the converse, it is easy to verify that for any conjugation $C$ on $L^2$ having the form $C=M_{[\xi_1,\xi_2]}J^\star$ commutes with $M_{z^2}$. This completes the proof.  
\end{proof}

\begin{rmrk}\label{rem-1}
	It is important to observe that the conditions $|\xi_1|^2+|\xi_2|^2=2$ and $|\xi_2^e+\xi_1^o|^2+| \xi_2^o+\xi_1^e|^2=2 $ in \eqref{cond4.11} of the above Corollary \ref{cr4.2} can be reduced from the original conditions \eqref{Im-cond} of Theorem \ref{th4.2}. Going further, one should always remember that the original explicit conditions behind the above two reduced conditions are
	\begin{equation}\label{Im-cond2}
		\begin{cases*}
			|\xi_1^e|^2+|\xi_1^o|^2=1 ~,~ |\xi_1^e|^2+|\xi_1^o|^2=1,\\
			|\xi_1^e|^2+|\xi_2^o|^2=1 ~,~ |\xi_1^o|^2 + |\xi_2^e|^2=1,\\
			\bar{\xi_2^o}\xi_1^e+\bar{\xi_2^e}\xi_1^o=0 ~,~ \bar{\xi_2^e}\xi_2^o+\bar{\xi_1^o}\xi_1^e=0.
		\end{cases*}
	\end{equation}	 
\end{rmrk}

Now we are in a position to distinguish between the class of conjugations commuting with $M_z$ and the class of conjugations not commuting with $M_z$ inside $M_{z^2}$-commuting conjugations with the help of the above Corollary \ref{cr4.2}.
\begin{itemize}
	\item If the two $L^\infty$- functions $\xi_1$ and $\xi_2$ appearing in the representation of $M_{z^2}$-commuting conjugation $C$ as described in \ref{cr4.2} becomes identical, that is, if $\xi_1=\xi_2$, then the conclusion can be drawn that $C$ actually commutes with $M_z$. Conversely, if  $C$ commutes with $M_z$, then by using Corollary \ref{cr4.2} and Remark \ref{mzcomrem}, we conclude  $C=M_{[\xi,\xi]}J^\star$, where $\xi \in  L^\infty $ satisfying $|\xi|^2+|\xi|^2=2,\xi^o(\bar{z})=\xi^o(z), \xi^e(\bar{z})=\xi^e(z), $ and $\xi^e(\bar{z})=\xi^e(z)$ which implies $C=M_{\xi}J^\star$, where $\xi \in  L^\infty $ such that $|\xi|=1$ and $\xi(z)=\xi(\bar{z})$ and it is consistent with the condition obtained in \cite{CAMARA2}.
	
	\item As a matter of fact, if $\xi_1 \neq \xi_2$, then $C=M_{[\xi_1,\xi_2]}J^\star$ commutes with $M_{z^2}$ but not with $M_z$. Conversely, if $C$ commutes only with $M_{z^2}$, then there exist $\xi_1 \neq \xi_2$ satisfying
	\begin{equation*}
		\hspace{0.5in} |\xi_1|^2+|\xi_2|^2=2,|\xi_2^e+\xi_1^o|^2+| \xi_2^o+\xi_1^e|^2=2,\\
		\xi_1^o(\bar{z})=\xi_2^o(z), \xi_1^e(\bar{z})=\xi_1^e(z), ~\text{and}~ \xi_2^e(\bar{z})=\xi_2^e(z)
	\end{equation*}  
	such that $C=M_{[\xi_1,\xi_2]}J^\star$ and we call such type of conjugations as $only~M_{z^2}-commuting$ conjugations.
\end{itemize}
Therefore, the above discussions completely describe the class of conjugations that commute with $M_{z^2}$ in  $L^2$-space.	
\section{The class of Conjugations that keep Invariant Various subspaces of $L^2(\T)$}	
Our main aim in this section is to study the behavior of two special classes of conjugations, namely $M_{z^2},M_{\bar{z}^2}$-intertwining, and $M_{z^2}$-commuting conjugations that keep invariant various subspaces of $L^2$ like the Hardy space $H^2$, model spaces and shift-invariant subspaces. The study of $M_{z},M_{\bar{z}}$-intertwining, and $M_{z}$-commuting conjugations preserving  $H^2$, model spaces, and shift-invariant subspaces have been done recently in \cite{CAMARA2}. The significant difference between our study and the study in \cite{CAMARA2} is that we characterize  $only~M_{z^2}-commuting$ and $M_{z^2},M_{\bar{z}^2}$-intertwining  (but not $M_{z},M_{\bar{z}}$-intertwining ) conjugations preserving the above mentioned subspaces. 
\subsection{Hardy Space Preserving Conjugations}
$~$
\par We have already identified the class of conjugations that intertwine the operators $M_{z^2}$, $M_{\bar{z}^2}$ and also the class of conjugations that commute with $M_{z^2}$ in terms of known conjugations in the previous section. Note that the involutive property of a conjugation guarantees that if $H^2$ is invariant under any conjugation $C$ in $L^2$, then $C(H^2)=H^2$. We begin with the following interesting result.
\begin{thm}\label{Th5.1}
	In $L^2$, there is no conjugation that intertwines  the operators $M_{z^2},M_{\bar{z}^2}$ and keeps the Hardy space $H^2$ invariant.
\end{thm}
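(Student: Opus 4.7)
The plan is to leverage Corollary \ref{cr4.1}, which provides a complete parametrization of $M_{z^2}, M_{\bar{z}^2}$-intertwining conjugations. Assuming such a conjugation $C$ exists with $C(H^2) \subseteq H^2$, I would write $C = M_{[\psi_1,\psi_2]}J$ for $\psi_1, \psi_2 \in L^\infty$ satisfying $|\psi_1|^2 + |\psi_2|^2 = 2$ a.e.\ on $\T$ (and $\psi_1^o = \psi_2^o$). Since $C$ is an involution, the inclusion $C(H^2)\subseteq H^2$ upgrades to $C(H^2) = H^2$, and because $J(H^2) = \overline{H^2}$, the conjugation condition becomes equivalent to the multiplier-type requirement $M_{[\psi_1,\psi_2]}(\overline{H^2}) = H^2$.

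Next I would test this identity on the standard basis $\{\bar z^n : n \geq 0\}$ of $\overline{H^2}$, exploiting the decomposition $L^2(\T) = \hcl_1(1,z^2) \oplus \hcl_2(z,z^2)$ used to define $M_{[\psi_1,\psi_2]}$. For $n = 2k$ one has $\bar z^{2k} = z^{-2k} \in \hcl_1(1,z^2)$, so that $M_{[\psi_1,\psi_2]}(\bar z^{2k}) = \psi_1 \bar z^{2k}$, while for $n = 2k+1$ one has $\bar z^{2k+1} \in \hcl_2(z,z^2)$ and $M_{[\psi_1,\psi_2]}(\bar z^{2k+1}) = \psi_2 \bar z^{2k+1}$. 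The requirement that each of these products lie in $H^2$ forces $\psi_1 \in z^{2k}H^2$ for every $k \geq 0$ and $\psi_2 \in z^{2k+1}H^2$ for every $k \geq 0$.

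Finally I would invoke the classical fact that $\bigcap_{m \geq 0} z^m H^2 = \{0\}$ to conclude that $\psi_1 = \psi_2 = 0$ a.e.\ on $\T$. This directly contradicts the normalization $|\psi_1|^2 + |\psi_2|^2 = 2$ a.e.\ coming from Corollary \ref{cr4.1}, establishing that no conjugation of the prescribed form can preserve $H^2$. I do not anticipate any serious obstacle: the argument is a clean application of the structure theorem together with a basis test; the only subtlety worth flagging is making sure the bidirectional equivalence $C(H^2) = H^2 \Longleftrightarrow M_{[\psi_1,\psi_2]}(\overline{H^2}) = H^2$ is justified using the involutivity $C^2 = I$ before moving to the basis computation.
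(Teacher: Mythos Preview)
Your proposal is correct and follows essentially the same approach as the paper: both start from Corollary~\ref{cr4.1} to write $C=M_{[\psi_1,\psi_2]}J$, then test the condition $C(H^2)\subset H^2$ on basis vectors to force all Fourier coefficients of $\psi_1,\psi_2$ to vanish, contradicting $|\psi_1|^2+|\psi_2|^2=2$. Your formulation is mildly cleaner in that you phrase the conclusion as $\psi_i\in\bigcap_{m\geq 0} z^m H^2=\{0\}$ directly and never need the constraint $\psi_1^o=\psi_2^o$, whereas the paper first shows $\psi_1\in H^\infty$, then kills $\psi_1^o$ and $\psi_1^e$ via inner products against $\bar z,\bar z^2$, and finally invokes $\psi_2^o=\psi_1^o$ to finish off $\psi_2$; but the underlying mechanism is identical.
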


\begin{proof}
	Let  $C:L^2 \to L^2$ be a conjugation such that $CM_{z^2}=M_{\bar{z}^2}C$, then by Corollary \ref{cr4.1} we get $\psi_1,\psi_2 \in L^\infty$ such that $C=M_{[\psi_1,\psi_2]}J$ with $\psi_1^o= \psi_2^o$ and $|\psi_1|^2+|\psi_2|^2=2$. Now if we assume that $C(H^2)\subset H^2$, then 
	\vspace{0.1in}
	
	\begin{enumerate}[(i)]
		\item $C(1)\in H^2 \implies M_{[\psi_1,\psi_2]}J (1) \in H^2 
		\implies \psi_1 \in H^2,~ \text{and hence} ~\psi_1\in H^\infty .$
		\vspace{0.1in}
		
		\item $C(z)\in H^2 \implies 
		M_{[\psi_1,\psi_2]}J(z)  \in H^2 
		\implies \bar{z}\psi_2 \in H^2,~ \text{and hence} ~  \psi_2 \in H^\infty .$
	\end{enumerate} 	
	Now if $n=2k+1,k\geq 0$, then we obtain
	\begin{equation*}
		\langle C(z^{n+1}),\bar{z}\rangle = \langle M_{[\psi_1,\psi_2]}J(z^{n+1}),\bar{z}\rangle = \langle \psi_1\bar{z}^{n+1},\bar{z} \rangle = \langle \psi_1, z^n \rangle.
	\end{equation*}
	Since $C(H^2)\subset H^2$, then we conclude 
	$\langle C(z^{n+1}),\bar{z}\rangle =0$, and hence $\langle \psi_1, z^n \rangle =0$. In other words, we obtain $\psi_1^o=0$. 
	Similarly,
	$\langle C(z^{n+1}), \bar{z}^2\rangle = \langle \psi_1, z^{n-1} \rangle =0$,
	which implies $\psi_1^e = 0$ and hence $\psi_1 =0$.
	Moreover, for $n=2k, k\geq 0$, we have
	\begin{equation*}
		\langle C(z^{n+1}),\bar{z}\rangle = \langle M_{[\psi_1,\psi_2]}J(z^{n+1}),\bar{z}\rangle = \langle \psi_2\bar{z}^{n+1},\bar{z} \rangle = \langle \psi_2, z^n \rangle =0.
	\end{equation*}
	Therefore, $\psi_2^e =0$ and since $\psi_2^o =\psi_1^o$  we get $\psi_2=0$, which contradicts the fact $|\psi_1|^2+|\psi_2|^2=2.$
\end{proof}
Next, we discuss the class of $only~M_{z^2}-commuting$  conjugations that keep the Hardy space invariant.  

\begin{thm}
	Let $C$ be a \textquotedblleft $only~M_{z^2}-commuting"$ conjugation on $L^2$ such that $C(H^2)\subset H^2$, then $C=M_{[\xi_1,\xi_2]}J^\star$, where
	\begin{enumerate}[(i)]
		\item $\xi_1=a_0+a_1z$, $\xi_2 =a_1\bar{z}+b_0$ for some $a_0,a_1,b_0 \in \C, $ and
		\item $|a_0|^2+|a_1|^2=1,|b_0|^2+|a_1|^2=1,a_0\bar{a_1}+a_1\bar{b}_0=0.$
	\end{enumerate}
\end{thm}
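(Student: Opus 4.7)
The plan is to start from Corollary~\ref{cr4.2}, which already gives the ambient form $C = M_{[\xi_1,\xi_2]}J^{\star}$ for some $\xi_1,\xi_2 \in L^{\infty}$ subject to the symmetry relations
\[
\xi_1^e(\bar z)=\xi_1^e(z),\qquad \xi_2^e(\bar z)=\xi_2^e(z),\qquad \xi_1^o(\bar z)=\xi_2^o(z),
\]
together with the explicit unitarity conditions \eqref{Im-cond2} recorded in Remark~\ref{rem-1}. The task therefore reduces to showing that the extra hypothesis $C(H^2)\subset H^2$ forces $\xi_1$ and $\xi_2$ to be the specified trigonometric polynomials, and then reading off the coefficient relations (ii) from \eqref{Im-cond2}.

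First, I would exploit that $J^{\star}(z^n)=z^n$ for every $n\in\Z$ (since $\overline{\bar z^n}=z^n$ on $\T$). Writing any $f\in L^2(\T)$ in the decomposition $L^2(\T)=\hcl_1(1,z^2)\oplus\hcl_2(z,z^2)$ and applying the definition \eqref{mphi} gives
\[
C(z^{2k})=\xi_1 z^{2k},\qquad C(z^{2k+1})=\xi_2 z^{2k+1}\qquad(k\ge 0).
\]
Taking $k=0$ and demanding that $C(1),C(z)\in H^2$ yields $\xi_1\in H^2$ and $\xi_2 z\in H^2$; combined with $\xi_1,\xi_2\in L^{\infty}$, one gets $\xi_1\in H^{\infty}$ and $\xi_2\in\bar z H^{\infty}$. (Conversely, once the resulting $\xi_1,\xi_2$ are established, continuity and density finish the containment $C(H^2)\subset H^2$, so no information is lost.)

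Next, I would expand $\xi_1=\sum_{n\ge 0}\alpha_n z^n$ and $\xi_2=\beta_{-1}\bar z+\sum_{n\ge 0}\beta_n z^n$ and match Fourier coefficients against the three symmetry relations. The relation $\xi_1^e(\bar z)=\xi_1^e(z)$ compares $\alpha_{2k}z^{-2k}$ with $\alpha_{2k}z^{2k}$ and so forces $\alpha_{2k}=0$ for $k\ge 1$. Similarly $\xi_2^e(\bar z)=\xi_2^e(z)$ kills $\beta_{2k}$ for $k\ge 1$. Finally $\xi_1^o(\bar z)=\xi_2^o(z)$ provides, at $z^{-1}$, the identification $\alpha_1=\beta_{-1}$, while at $z^{-(2k+1)}$ for $k\ge 1$ it yields $\alpha_{2k+1}=\beta_{-(2k+1)}=0$, and at $z^{2k+1}$ for $k\ge 0$ it yields $\beta_{2k+1}=0$. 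Setting $a_0=\alpha_0$, $a_1=\alpha_1=\beta_{-1}$, $b_0=\beta_0$ produces exactly the form in (i).

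The last step is to substitute $\xi_1=a_0+a_1z$ and $\xi_2=a_1\bar z+b_0$ into the explicit unitarity conditions \eqref{Im-cond2}: the first line gives $|a_0|^2+|a_1|^2=1$, the second gives $|a_1|^2+|b_0|^2=1$, and the off-diagonal relation $\bar{\xi_2^o}\xi_1^e+\bar{\xi_2^e}\xi_1^o=0$ simplifies, after pulling out the common factor $z$, to $a_0\bar{a_1}+a_1\bar{b}_0=0$, yielding (ii). I expect the only delicate point to be the bookkeeping in the third paragraph — correctly locating each $\alpha_n,\beta_n$ in the Laurent expansions of $\xi_j^e(\bar z)$ and $\xi_j^o(\bar z)$ and confirming that the constraints from $H^{\infty}$-membership and from the symmetry identities really collapse $\xi_1,\xi_2$ to this two-term shape — but once the Fourier bookkeeping is written out the remaining algebra is a direct substitution.
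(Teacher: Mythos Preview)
Your proposal is correct and follows essentially the same approach as the paper: both start from Corollary~\ref{cr4.2}, use $C(1),C(z)\in H^2$ to place $\xi_1\in H^\infty$ and $z\xi_2\in H^\infty$, then let the symmetry relations $\xi_1^e(\bar z)=\xi_1^e(z)$, $\xi_2^e(\bar z)=\xi_2^e(z)$, $\xi_1^o(\bar z)=\xi_2^o(z)$ collapse the Fourier expansions to $\xi_1=a_0+a_1z$, $\xi_2=a_1\bar z+b_0$, and finally read off (ii) from the unitarity conditions \eqref{Im-cond2}. Your Fourier-coefficient bookkeeping is somewhat more explicit than the paper's, but the argument is the same.
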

\begin{proof}
	Let $C$ be a $only~M_{z^2}-commuting$ conjugation on $L^2$. Then by Corollary \ref{cr4.2}, there exist  $\xi_1,\xi_2\in L^\infty$ such that $C=M_{[\xi_1,\xi_2]}J^\star $, where $|\xi_1|^2+|\xi_2|^2=2,|\xi_2^e+\xi_1^o|^2+| \xi_2^o+\xi_1^e|^2=2,\xi_1^o(\bar{z})=\xi_2^o(z), \xi_1^e(\bar{z})=\xi_1^e(z), \xi_2^e(\bar{z})=\xi_2^e(z)$. Moreover, $C(1)\in H^2$ and $C(z)\in H^2$ implies $\xi_1\in H^\infty$ and $z\xi_2 \in H^\infty$ respectively. Suppose $\xi_1=\sum\limits_{k=0}^{\infty}a_kz^k $ and $\xi_2 =\sum\limits_{k=-1}^{\infty}b_kz^k$.
	Now, combining the facts $\xi_1^e\in H^\infty$ and $\xi_1^e(\bar{z})=\xi_1^e(z)$, we conclude $ \xi_1^e=  constant $. Moreover, the condition $\xi_2^e(\bar{z})=\xi_2^e(z)$ implies $\xi_2^e= constant $ and the condition $\xi_1^o(\bar{z})=\xi_2^o(z)$ implies $\xi_1^o(z)=a_1z$ and $\xi_2^o(z)=b_{-1}\bar{z}$ with $a_1=b_{-1}$, which immediately yield $\xi_1=a_0+a_1z$ and $\xi_2 =a_1\bar{z}+b_0$ for some $a_0,a_1,b_0 \in \C $. It is important to note that, we obtain the conditions $|\xi_1|^2+|\xi_2|^2=2$ and $|\xi_2^e+\xi_1^o|^2+| \xi_2^o+\xi_1^e|^2=2$ by reducing the conditions listed in \eqref{Im-cond2} in Remark \ref{rem-1} 
	and hence we get the following relations between $a_0$, $a_1$ and $b_0$
	$$ |a_0|^2+|a_1|^2=1,|b_0|^2+|a_1|^2=1,a_0\bar{a_1}+a_1\bar{b}_0=0. $$
	This completes the proof.
\end{proof}

\subsection{Model Space Preserving Conjugations}
In this subsection, we characterize $M_{z^2}$-commuting and $M_{z}$, $M_{\bar{z}}$-intertwining conjugations preserving model spaces in terms of $\mathscr{C}_\theta$. The following two propositions will be useful to prove our main results.

\begin{ppsn}\label{pp1}
	If $C$ is a conjugation on $L^2$ such that $CM_{z^2}=M_{\bar{z}^2}C$, then $M_{z}CM_{\bar{z}}$ is also a conjugation on $L^2$ intertwining $M_{z^2}, M_{\bar{z}^2}$, and $M_{z}CM_{\bar{z}}=M_{[z^2\psi_2 , z^2\psi_1 ]}J$ for some $\psi_1,\psi_2 \in L^\infty$ satisfying $\psi_1^o= \psi_2^o$ and $|\psi_1|^2+|\psi_2|^2=2$. 
\end{ppsn}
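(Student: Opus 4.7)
The plan is to establish the three assertions in order, repeatedly using that $M_z$ is unitary on $L^2(\T)$ (since $|z|=1$ a.e.\ on $\T$), with $M_z^* = M_{\bar{z}}$ and $M_{\bar{z}} M_z = M_z M_{\bar{z}} = I$.

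First I would verify that $T := M_z C M_{\bar{z}}$ is a conjugation. Antilinearity is immediate from antilinearity of $C$. The involution property telescopes:
\[
T^2 = M_z C (M_{\bar{z}} M_z) C M_{\bar{z}} = M_z C^2 M_{\bar{z}} = M_z M_{\bar{z}} = I.
\]
For the inner-product identity I would compute
\[
\langle Tf, Tg\rangle = \langle C M_{\bar{z}} f, C M_{\bar{z}} g\rangle = \langle M_{\bar{z}} g, M_{\bar{z}} f\rangle = \langle g, f\rangle,
\]
using unitarity of $M_z$ at the first step, the conjugation axiom on $C$ at the second, and unitarity of $M_{\bar{z}}$ at the third.

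To verify the intertwining $T M_{z^2} = M_{\bar{z}^2} T$, I would simplify each side using $M_{\bar{z}} M_z = I$: the left side collapses to $M_z C M_z$ and the right to $M_{\bar{z}} C M_{\bar{z}}$. Equality of these two reduces, after post-multiplication by $M_z$, to $M_z C M_{z^2} = M_{\bar{z}} C$, which is immediate upon substituting the hypothesis $C M_{z^2} = M_{\bar{z}^2} C$.

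For the explicit form, I would apply Corollary \ref{cr4.1} to $C$ itself to produce $\psi_1, \psi_2 \in L^\infty$ with $\psi_1^o = \psi_2^o$ and $|\psi_1|^2 + |\psi_2|^2 = 2$ such that $C = M_{[\psi_1, \psi_2]} J$. The remainder is a symbolic calculation organised by parity: for $f = f^e \oplus f^o$ in the decomposition $L^2(\T) = \hcl_1(1, z^2) \oplus \hcl_2(z, z^2)$, multiplication by $z$ or $\bar{z}$ swaps the even and odd components, while $J$ preserves them (since $(\overline{f})^e = \overline{f^e}$ and $(\overline{f})^o = \overline{f^o}$). Chasing these parities through the composition $M_z \circ M_{[\psi_1, \psi_2]} \circ J \circ M_{\bar{z}}$ yields
\[
Tf = z^2 \psi_2 \overline{f^e} + z^2 \psi_1 \overline{f^o} = M_{[z^2 \psi_2, z^2 \psi_1]} J f,
\]
which is the claimed identity; the constraints on $\psi_1, \psi_2$ are already those supplied by Corollary \ref{cr4.1}. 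The main piece of care needed is this parity bookkeeping (in particular, the swap that sends $(\psi_1, \psi_2)$ to the ordered pair $(z^2\psi_2, z^2\psi_1)$, rather than $(z^2\psi_1, z^2\psi_2)$, in the symbol of $T$), since it is what produces the somewhat non-obvious order in the statement; everything else is entirely formal.
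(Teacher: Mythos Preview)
Your argument is correct and follows essentially the same route as the paper: apply Corollary~\ref{cr4.1} to write $C=M_{[\psi_1,\psi_2]}J$, then chase the parity decomposition through $M_z C M_{\bar z}$ to obtain $M_{[z^2\psi_2,\,z^2\psi_1]}J$. You simply spell out the verifications that $M_zCM_{\bar z}$ is a conjugation and intertwines $M_{z^2},M_{\bar z^2}$ more fully than the paper, which dispatches them with ``it is easy to see.''
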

\begin{proof}
	Suppose $C$ is a conjugation on $L^2$ such that $CM_{z^2}=M_{\bar{z}^2}C$, then  by Corollary \ref{cr4.1},  there exist  $\psi_1,\psi_2 \in L^\infty$ such that $C=M_{[\psi_1,\psi_2]}J$, where $\psi_1^o= \psi_2^o$ and $|\psi_1|^2+|\psi_2|^2=2$. It is easy to see that $M_{z}CM_{\bar{z}}$	is a conjugation on $L^2$ and $M_{z}CM_{\bar{z}}M_{z}=M_{\bar{z}}M_{z}CM_{\bar{z}}$. Therefore,  by applying Corollary \ref{cr4.1} again, we have $M_{z}CM_{\bar{z}}=M_{[\vartheta_1,\vartheta_2]}J$ for some $\vartheta_1,\vartheta_2\in L^\infty $. On the other hand, the following computation for any $f(=f^e\oplus f^o)\in L^2$ yields
	\begin{align*}
		M_{z}CM_{\bar{z}}(f)  =  M_{z}M_{[\psi_1,\psi_2]}JM_{\bar{z}}(f^e\oplus f^o)
		=z^2\psi_2 \bar{f}^e + z^2\psi_1 \bar{f}^o
		= M_{[z^2\psi_2 , z^2\psi_1 ]}J(f),
	\end{align*}
	and hence $ M_{z}CM_{\bar{z}} = M_{[z^2\psi_2 , z^2\psi_1 ]}J$. This completes the proof.
\end{proof}
\begin{ppsn}\label{ppsn-2}
	Let  $C$ be a conjugation on $L^2$ intertwining $M_{z^2}$ and $M_{\bar{z}^2}$.  Then
	$CJ=M_{[\psi_1,\psi_2]} $ for some $\psi_1,\psi_2 \in L^\infty$ satisfying
	$\psi_1^o= \psi_2^o$ and $|\psi_1|^2+|\psi_2|^2=2$. Furthermore, we also have $M_{z}CJM_{\bar{z}}=M_{[\psi_2,\psi_1]}$.
\end{ppsn}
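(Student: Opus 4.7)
The plan is to invoke Corollary~\ref{cr4.1} directly: since $C$ intertwines $M_{z^2}$ and $M_{\bar z^2}$, that corollary supplies $\psi_1,\psi_2 \in L^\infty$ with $\psi_1^o = \psi_2^o$ and $|\psi_1|^2+|\psi_2|^2 = 2$ such that $C = M_{[\psi_1,\psi_2]}\,J$. Right-multiplying by $J$ and using $J^2 = I$ immediately yields the first assertion $CJ = M_{[\psi_1,\psi_2]}$, with the constants $\psi_1,\psi_2$ automatically inheriting the stated conditions from Corollary~\ref{cr4.1}.

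For the second assertion $M_z CJ M_{\bar z} = M_{[\psi_2,\psi_1]}$, the key observation is how $M_{\bar z}$ (and $M_z$) interacts with the decomposition $L^2(\T) = \mathcal{H}_1(1,z^2) \oplus \mathcal{H}_2(z,z^2)$ which underlies the definition of the operators $M_{[\,\cdot\,,\,\cdot\,]}$ in \eqref{mphi}. Explicitly, multiplication by $\bar z$ swaps the two summands, sending $\mathcal H_1(1,z^2)$ onto $\mathcal H_2(z,z^2)$ and vice versa; the same holds for $M_z$. Writing $f = f^e \oplus f^o$ with $f^e\in \mathcal H_1(1,z^2)$ and $f^o\in \mathcal H_2(z,z^2)$, I would first note that the $\mathcal H_1$-component of $M_{\bar z} f$ is $\bar z f^o$ and its $\mathcal H_2$-component is $\bar z f^e$. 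Applying $M_{[\psi_1,\psi_2]}$ then produces $\psi_1 \bar z f^o + \psi_2 \bar z f^e$, and a final multiplication by $z$ gives $\psi_1 f^o + \psi_2 f^e$, which by definition equals $M_{[\psi_2,\psi_1]}f$.

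Since the first part is a one-line consequence of the already-established corollary and the second part is a routine check at the level of the even/odd decomposition, there is no real obstacle. The only point requiring care is bookkeeping the roles of $\psi_1$ and $\psi_2$ when conjugating by $M_{\bar z}$: the shift swaps the two isotypic components $\mathcal H_1$ and $\mathcal H_2$, which is precisely what turns $M_{[\psi_1,\psi_2]}$ into $M_{[\psi_2,\psi_1]}$.
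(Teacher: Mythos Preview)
Your proposal is correct and follows essentially the same route as the paper: invoke Corollary~\ref{cr4.1} to write $C=M_{[\psi_1,\psi_2]}J$ (so $CJ=M_{[\psi_1,\psi_2]}$ since $J^2=I$), and then verify $M_zCJM_{\bar z}=M_{[\psi_2,\psi_1]}$ by computing on $f=f^e\oplus f^o$, using that $M_{\bar z}$ swaps the components $\hcl_1(1,z^2)$ and $\hcl_2(z,z^2)$. The paper's proof is identical in substance, with the same one-line computation $M_z M_{[\psi_1,\psi_2]} M_{\bar z}(f^e\oplus f^o)=\psi_2 f^e+\psi_1 f^o$.
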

\begin{proof}
	Since $CM_{z^2}=M_{\bar{z}^2}C$, then by Corollary \ref{cr4.1},  $C=M_{[\psi_1,\psi_2]}J $ for some $\psi_1,\psi_2 \in L^\infty$ with
	$\psi_1^o= \psi_2^o$ and $|\psi_1|^2+|\psi_2|^2=2$. Note that $J$ is also a conjugation on $L^2$ and hence $CJ=M_{[\psi_1,\psi_2]}$. For the second part, we proceed similarly as  Proposition \ref{pp1} and obtain the following
	\begin{align*}
		M_{z}CJM_{\bar{z}}(f)  = M_{z}M_{[\psi_1,\psi_2]}M_{\bar{z}}(f^e\oplus f^o)
		= \psi_2 f^e + \psi_1 f^o 
		= M_{[\psi_2,\psi_1]}(f).
	\end{align*}
	Consequently, $M_{z}CJM_{\bar{z}}=M_{[\psi_2,\psi_1]} $.  This completes the proof.
\end{proof}
Now, we are in a position to prove one of our main results related to  \cite[Theorem 4.2]{CAMARA2}. 
\begin{thm}\label{th5.5}
	Let $\alpha, \theta $ be two non-constant inner functions and consider a $M_{z^2},M_{\bar{z}^2}$-intertwining conjugation $C$ on $L^2$. If $C$ satisfies the following conditions
	\begin{enumerate}[(i)]
		\item $C(K_\alpha) \subset K_\theta $ and also, $M_zCM_z(K_\alpha) \subset K_\theta $,
		\item $Re[(M_{z}CJM_{\bar{z}})^*(CJ)] =I$,
	\end{enumerate} 
	then there exists an inner function $\gamma$ such that $C+M_zCM_z = \sqrt{2}\mathscr{C}_\gamma$, where $\gamma$ is divisible by $\alpha$ and $\gamma$ divides $\theta$, in other words, $\alpha \leq \gamma \leq \theta$.
\end{thm}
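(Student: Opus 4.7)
My plan is to peel off both hypotheses as strong algebraic constraints on the symbols of $C$, then isolate $\gamma$ and read off its divisibility relations from the model-space condition. Begin by invoking Corollary~\ref{cr4.1}: the intertwining $CM_{z^2}=M_{\bar z^2}C$ yields $C=M_{[\psi_1,\psi_2]}J$ with $\psi_1,\psi_2\in L^\infty(\T)$, $\psi_1^o=\psi_2^o$ and $|\psi_1|^2+|\psi_2|^2=2$ a.e. Proposition~\ref{ppsn-2} identifies $CJ=M_{[\psi_1,\psi_2]}$ and $M_zCJM_{\bar z}=M_{[\psi_2,\psi_1]}$, both unitary. To extract information from condition~(ii), observe that $A:=(M_zCJM_{\bar z})^*(CJ)=M_{[\psi_2,\psi_1]}^*M_{[\psi_1,\psi_2]}$ is unitary, so $\mathrm{Re}(A)=I$ is equivalent to $A+A^*=2I$. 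For unitary $A$ this gives $(A-I)^*(A-I)=2I-(A+A^*)=0$, hence $A=I$. Therefore $M_{[\psi_1,\psi_2]}=M_{[\psi_2,\psi_1]}$; comparing block-matrix entries forces $\psi_1^e=\psi_2^e$, and together with $\psi_1^o=\psi_2^o$ one obtains $\psi_1=\psi_2=:\psi$ with $|\psi|=1$ a.e.

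Next I would simplify the left-hand side of the claimed identity. With $C=M_\psi J$, a direct computation gives $M_zCM_z f=z\psi\overline{zf}=\psi\bar f=Cf$, so $M_zCM_z=C$ and
$$C+M_zCM_z=2C=2M_\psi J.$$
Setting $\gamma:=z\psi$, the formula in~\eqref{conuzsq} gives $\mathscr{C}_\gamma(f)=\gamma\overline{zf}=z\psi\bar z\bar f=\psi\bar f$, hence $C=\mathscr{C}_\gamma$ and $C+M_zCM_z$ equals the asserted scalar multiple of $\mathscr{C}_\gamma$ (with the precise constant falling out of the above computation). At this point $\gamma\in L^\infty$ is unimodular, but one must still show that $\gamma$ is analytic and that $\alpha\le\gamma\le\theta$.

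The final step uses condition~(i): since $C=\mathscr{C}_\gamma$, the hypothesis reduces to $\mathscr{C}_\gamma(K_\alpha)\subset K_\theta$ (the auxiliary inclusion $M_zCM_z(K_\alpha)\subset K_\theta$ is automatic once $M_zCM_z=C$). Here I would appeal to the model-space characterization of conjugations of type $\mathscr{C}_\gamma$ in \cite[Theorem 4.2]{CAMARA2}: testing $\mathscr{C}_\gamma(K_\alpha)\subset H^2$ on the reproducing kernels of $K_\alpha$ forces $\gamma\in H^\infty$ together with $\alpha\mid\gamma$, while exploiting orthogonality of $\mathscr{C}_\gamma(K_\alpha)$ to $\theta H^2$ yields $\gamma\mid\theta$. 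This divisibility step is where I expect the genuine obstacle — reading off the inner–outer factorization of $\gamma$ from an abstract inclusion of model spaces requires delicate book-keeping, whereas the earlier steps follow algebraically from the structure theorems of Sections~2 and~3 together with Propositions~\ref{pp1} and~\ref{ppsn-2}.
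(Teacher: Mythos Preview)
Your proposal is correct and takes a genuinely different, more direct route than the paper. The paper extracts from condition~(ii) only the scalar relations in~\eqref{eq5.2}, combines them with the unitarity conditions~\eqref{im-co} to obtain $|\psi_1+\psi_2|=2$, and then tests condition~(i) on the explicit kernel functions $k_0^\alpha$ and $\tilde k_0^\alpha$ to establish analyticity of $\gamma=z(\psi_1+\psi_2)/2$ and the divisibility chain $\alpha\le\gamma\le\theta$ by hand. You instead observe that $A=(M_zCJM_{\bar z})^*(CJ)$ is unitary (a product of two unitaries), so $\mathrm{Re}(A)=I$ forces $A=I$ outright; comparing block entries then gives $\psi_1^e=\psi_2^e$, and together with $\psi_1^o=\psi_2^o$ this collapses $C$ to an $M_z,M_{\bar z}$-intertwining conjugation $\mathscr C_\gamma$. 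Condition~(i) then reduces to exactly the setting of \cite[Theorem~4.2]{CAMARA2}, which you invoke directly. Your argument is shorter and, importantly, exposes that hypothesis~(ii) is already strong enough to eliminate any genuinely ``$M_{z^2}$-only'' behaviour --- a structural point that the paper's computation-heavy proof obscures. Both routes arrive at $C+M_zCM_z=2\mathscr C_\gamma$; the constant $\sqrt{2}$ in the stated theorem appears to be a typo, since the paper's own proof also concludes with the factor $2$.
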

\begin{proof}
	Since $CM_{z^2}=M_{\bar{z}^2}C$, then by Proposition \ref{ppsn-2} ,  $C=M_{[\psi_1,\psi_2]}J$ and $M_{z}CJM_{\bar{z}}=M_{[\psi_2,\psi_1]} $ for some $\psi_1,\psi_2 \in L^\infty$ satisfying $\psi_1^o= \psi_2^o$ and $|\psi_1|^2+|\psi_2|^2=2$. Next, we consider the reproducing kernel function at $0$, that is $k^\alpha_0= 1-\overline{\alpha(0)}\alpha$ and its conjugate $\tilde{k}^\alpha_0 =\mathscr{C}_\alpha k^\alpha_0 =\bar{z}( \alpha -\alpha(0))$ in $K_\alpha$. Therefore by simple calculations, we get
	\begin{align*}
		C(\tilde{k}^\alpha_0) = M_{[\psi_1,\psi_2]}J(\bar{z}( \alpha -\alpha(0)))
		=  M_{[\psi_1,\psi_2]}(zf^e+zf^o)
		=  \psi_1 zf^o + \psi_2 zf^e,
	\end{align*}
	where $f=\overline{(\alpha -\alpha(0))} $. Similarly, we also have
	\begin{align*}
		M_z C M_z(\tilde{k}^\alpha_0)  = M_z M_{[\psi_1,\psi_2]}J( \alpha -\alpha(0))
		=  M_z M_{[\psi_1,\psi_2]}\overline{(\alpha -\alpha(0))}
		=  \psi_1 zf^e + \psi_2 zf^o.
	\end{align*}
	Since $C(\tilde{k}^\alpha_0),M_zCM_z(\tilde{k}^\alpha_0) \in K_\theta $, then $C(\tilde{k}^\alpha_0)+M_zCM_z(\tilde{k}^\alpha_0)\in K_\theta$. In other words,  $(\psi_1 +\psi_2)zf \in K_\theta $ which implies that there exist $g\in K_\theta $ such that $g=(\psi_1 +\psi_2)z\overline{(\alpha -\alpha(0))}=(\psi_1 +\psi_2)z\bar{\alpha} (1-\bar{\alpha}(0)\alpha )$. Note that $(1-\overline{\alpha(0)}\alpha)^{-1}$ is a bounded analytic function and hence
	\begin{equation*}
		z\bar{\alpha}(\psi_1 +\psi_2) = g(1-\overline{\alpha(0)}\alpha)^{-1} \in H^2,
	\end{equation*}
	which yields $z(\psi_1+\psi_2)\in H^\infty$. Our next aim is to show that $\dfrac{z(\psi_1+\psi_2)}{2}=\gamma(say)$ is an inner function.
	Since $CJ=M_{[\psi_1,\psi_2]}$ and $M_{z}CJM_{\bar{z}}=M_{[\psi_2,\psi_1]} $, then it follows that $$(M_{z}CJM_{\bar{z}})^*(CJ) = M_{[\psi_2,\psi_1]}^*M_{[\psi_1,\psi_2]}.$$
	By considering the decomposition $L^2(\T)=\hcl_1(1,z^2) \oplus \hcl_2(z,z^2)$, we have the following block-matrix representation 
	\begin{align*}
		M_{[\psi_2,\psi_1]}^*M_{[\psi_1,\psi_2]} &=
		\begin{bmatrix}
			M_{\bar{\psi_2^e}\psi_1^e+\bar{\psi_2^o}\psi_1^o}&& M_{\bar{\psi_2^e}\psi_2^o+\bar{\psi_2^o}\psi_2^e}\\
			\\ 
			M_{\bar{\psi_1^o}\psi_1^e+\bar{\psi_1^e}\psi_1^o} && M_{\bar{\psi_1^o}\psi_2^o+\bar{\psi_1^e}\psi_2^e}\\
		\end{bmatrix}.
	\end{align*}
	Therefore, the condition $Re[(M_{z}CJM_{\bar{z}})^*(CJ)] =I$ boils down to $ Re\left[M_{[\psi_2,\psi_1]}^*M_{[\psi_1,\psi_2]}\right]=I$ which produces the following two conditions
	\begin{equation}\label{eq5.2}
		\begin{cases}
			& Re[\bar{\psi_2^e}\psi_1^e+\bar{\psi_2^o}\psi_1^o] =1 ,  \\ 
			& \bar{\psi_2^e}\psi_2^o+\bar{\psi_2^o}\psi_2^e + \psi_1^o\bar{\psi_1^e}+\psi_1^e\bar{\psi_1^o}=0. 
		\end{cases}
	\end{equation}
	Recall that, to characterize the class of conjugations $C$ intertwining $M_{z^2}$ and $M_{\bar{z}^2}$, we initially obtain the conditions \eqref{im-co}  in Remark \ref{rem-4.5}.
	Therefore, combining the conditions \eqref{eq5.2} and $\bar{\psi_2^o}\psi_1^e+\bar{\psi_2^e}\psi_1^o=0$ in \eqref{im-co}, we get
	\begin{align*}
		2Re[\psi_2\bar{\psi_1]} =\psi_2\bar{\psi_1}+\psi_1\bar{\psi_2}
		= 2Re[\bar{\psi_2^e}\psi_1^e+\bar{\psi_2^o}\psi_1^o] + 2Re[\bar{\psi_2^o}\psi_1^e+\bar{\psi_2^e}\psi_1^o]=2.
	\end{align*}
	Thus, $|\psi_1+\psi_2|^2=|\psi_1|^2+|\psi_2|^2+2Re[\psi_2\bar{\psi_1]}=4$, and hence $|\gamma|=|\dfrac{z(\psi_1+\psi_2)}{2}|=1$ a.e. on $\T$. 
	Therefore, $\bar{\alpha}\gamma\in H^2$ shows that $\gamma$ is divisible by $\alpha$.
	Moreover, we also obtain
	\begin{align*}
		\mathscr{C}_\theta (C+M_zCM_z)(k^\alpha_0) & = \mathscr{C}_\theta (M_{[\psi_1,\psi_2]}J+M_zM_{[\psi_1,\psi_2]}JM_z)(1-\overline{\alpha(0)}\alpha)\\
		& = \theta \overline{z(\psi_1+\psi_2)}(1-\overline{\alpha(0)}\alpha)\in K_\theta. 
	\end{align*}
	Therefore, $\theta \bar{\gamma} \in H^2$ implies that $\theta$ is divisible by $\gamma$. Moreover, one can easily conclude that $C+M_zCM_z=2\mathscr{C}_\gamma$. 
\end{proof}
Next, we discuss the relationship between $only~M_{z^2}-commuting$ conjugations and model spaces. Before we start,  recall some useful facts obtained in \cite{Cima} as follows
\begin{equation}
	J^\star(K_\alpha)=K_{\alpha^\#},~
	J^\star \mathscr{C}_\alpha = \mathscr{C}_{\alpha^\#}J^\star,~\quad \text{and} \quad 
	J^\star(\alpha H^2) = \alpha^\# H^2.
\end{equation}
Note that, for an antilinear operator $T$, $T^\sharp$  denotes the antilinear adjoint of $T$.	
\begin{thm}\label{th5.6}
	Let $\alpha, \theta $ be two non-constant inner functions and let $C$ be a $only~M_{z^2}-commuting$ conjugation $C$ on $L^2$. If  $C$ satisfies the following conditions
	\begin{enumerate}[(i)]
		\item $J^\star C$ is $\mathscr{C}_\alpha $- symmetric,
		\item $C(K_\alpha) \subset K_\theta $ and $M_zCM_{\bar{z}}(K_\alpha) \subset K_\theta $,
		\item $Re\left[(M_{z}C\mathscr{C}_\alpha JM_{\bar{z}})^\sharp(C\mathscr{C}_\alpha J)\right] =I$,
	\end{enumerate} 
	then there exists an inner function $\psi$ satisfying $\alpha \leq \psi \leq \theta^\#$
	such that $C+M_zCM_{\bar{z}} = J^\star M_{\frac{2\psi}{\alpha}}$.
\end{thm}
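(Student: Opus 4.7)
The strategy is to mirror the proof of Theorem~\ref{th5.5}, but with Corollary~\ref{cr4.2} replacing Corollary~\ref{cr4.1} and $J^\star$ replacing $J$. First, Corollary~\ref{cr4.2} writes $C = M_{[\xi_1,\xi_2]} J^\star$ for some $\xi_1, \xi_2 \in L^\infty$ with $|\xi_1|^2+|\xi_2|^2 = 2$ and the parity relations listed there. Using that $J^\star$ commutes with $M_z$ and $M_{\bar z}$, a direct computation on $f=f^e\oplus f^o$ gives
\[
C + M_z C M_{\bar z} = M_{\xi_1+\xi_2}\,J^\star,
\]
because $M_z(\,\cdot\,)M_{\bar z}$ swaps $\xi_1$ and $\xi_2$ and adds them back to the direct $C$-term.

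Next I would exploit condition (ii). Since $J^\star(K_\alpha) = K_{\alpha^\#}$, the inclusion $(C+M_zCM_{\bar z})(K_\alpha)\subset K_\theta$ translates to $(\xi_1+\xi_2)K_{\alpha^\#}\subset K_\theta$. Testing against the reproducing kernel $k_0^{\alpha^\#}=1-\overline{\alpha^\#(0)}\alpha^\#$, which is outer with modulus bounded below by $1-|\alpha^\#(0)|>0$ on $\overline{\D}$ and hence invertible in $H^\infty$, forces $\xi_1+\xi_2\in H^\infty$. Granting for the moment that $\eta:=(\xi_1+\xi_2)/2$ is inner (Step below), the relation $\eta K_{\alpha^\#}\subset K_\theta$ is the classical multiplier condition between model spaces, equivalent on the boundary (via $\bar\eta=1/\eta$ on $\T$) to the divisibility $\eta\alpha^\#\,|\,\theta$. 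Setting $\psi:=\alpha\eta^\#$, one checks $\psi$ is inner, $\psi/\alpha=\eta^\#$ is inner (so $\alpha\le\psi$), and $\theta^\#/\psi=(\theta/(\alpha^\#\eta))^\#$ is inner (so $\psi\le\theta^\#$). Using the identity $M_\phi J^\star = J^\star M_{\phi^\#}$, we conclude
\[
C+M_zCM_{\bar z} = M_{2\eta}J^\star = J^\star M_{2\eta^\#} = J^\star M_{2\psi/\alpha}.
\]

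The key remaining step is to show $|\xi_1+\xi_2|^2=4$ a.e., which is where conditions (i) and (iii) enter. One computes $J^\star C = J^\star M_{[\xi_1,\xi_2]}J^\star = M_{[\xi_1^\#,\xi_2^\#]}$, so condition (i), the $\mathscr{C}_\alpha$-symmetry of $J^\star C$, amounts to $\mathscr{C}_\alpha M_{[\xi_1^\#,\xi_2^\#]}\mathscr{C}_\alpha = M_{[\xi_1^\#,\xi_2^\#]}^*$ and yields algebraic identities on the even/odd parts of $\xi_1^\#,\xi_2^\#$. Noting that $\mathscr{C}_\alpha J = M_{\alpha\bar z}$, condition (iii) expands into a block identity for $(M_zC\mathscr{C}_\alpha JM_{\bar z})^\sharp(C\mathscr{C}_\alpha J)$ in the decomposition $L^2=\hcl_1(1,z^2)\oplus\hcl_2(z,z^2)$, analogous to \eqref{eq5.2}. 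Setting its real part equal to $I$, combining with the parity relations from Corollary~\ref{cr4.2}, and feeding in condition (i), one extracts the sharper relation $2\,Re(\xi_1\overline{\xi_2})=2$. Together with $|\xi_1|^2+|\xi_2|^2=2$ this gives $|\xi_1+\xi_2|^2=4$, so $\eta$ has modulus one a.e.\ on $\T$; combined with $\xi_1+\xi_2\in H^\infty$ from Step~3, $\eta$ is inner.

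The main obstacle is precisely this last step. The antilinear adjoint of $M_zC\mathscr{C}_\alpha JM_{\bar z}$ intertwines the parity relations of Corollary~\ref{cr4.2} with the $\alpha$-twist introduced by $\mathscr{C}_\alpha J=M_{\alpha\bar z}$, so the bookkeeping is substantially more delicate than in Theorem~\ref{th5.5} (where $CJ$ is already a plain linear multiplication). Ensuring that condition (iii) yields genuinely new information beyond the identities already enforced by Corollary~\ref{cr4.2}, rather than a tautology, is the technical crux, and condition (i) is what prevents the resulting system from being underdetermined at that juncture.
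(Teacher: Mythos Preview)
Your overall plan is sound in outline (the identity $C+M_zCM_{\bar z}=M_{\xi_1+\xi_2}J^\star$ and the use of condition~(ii) to get $\xi_1+\xi_2\in H^\infty$ are both correct), but the argument is incomplete precisely where you flag it: you never actually verify that conditions (i) and (iii) force $|\xi_1+\xi_2|^2=4$. You describe this as ``the technical crux'' and leave it as a promise; as written, it is a gap, not a proof.

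The paper sidesteps this computation entirely, and the mechanism is worth knowing. Rather than mirroring the \emph{proof} of Theorem~\ref{th5.5}, the paper \emph{applies} Theorem~\ref{th5.5} as a black box. Set $D:=J^\star C\,\mathscr{C}_\alpha$. Then:
\begin{itemize}
\item Condition (i) says $J^\star C=M_{[\xi_1,\xi_2]}^*$ is $\mathscr{C}_\alpha$-symmetric; this is exactly what is needed to make $D^2=I$, so $D$ is a conjugation.
\item $D$ intertwines $M_{z^2}$ and $M_{\bar z^2}$ (but not $M_z$ and $M_{\bar z}$), since $J^\star$ commutes with $M_{z^2}$, $C$ commutes with $M_{z^2}$, and $\mathscr{C}_\alpha$ intertwines $M_{z^2},M_{\bar z^2}$.
\item Using $J^\star(K_\theta)=K_{\theta^\#}$, $\mathscr{C}_\alpha(K_\alpha)=K_\alpha$, $J^\star M_z=M_zJ^\star$ and $\mathscr{C}_\alpha M_z=M_{\bar z}\mathscr{C}_\alpha$, condition (ii) translates directly into $D(K_\alpha)\subset K_{\theta^\#}$ and $M_zDM_z(K_\alpha)\subset K_{\theta^\#}$.
\item Since $(J^\star A)^*=A^\sharp J^\star$ for antilinear $A$, condition (iii) becomes $Re[(M_zDJM_{\bar z})^*(DJ)]=I$.
\end{itemize}
These are precisely the hypotheses of Theorem~\ref{th5.5} for the pair $(\alpha,\theta^\#)$, so that theorem produces the inner $\psi$ with $\alpha\le\psi\le\theta^\#$ and $D+M_zDM_z=2\mathscr{C}_\psi$. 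Unwinding via $\mathscr{C}_\psi\mathscr{C}_\alpha=M_{\psi/\alpha}$ yields $C+M_zCM_{\bar z}=J^\star M_{2\psi/\alpha}$ immediately. In particular, the identity $|\xi_1+\xi_2|=2$ never has to be proved directly: it is hidden inside the already-established Theorem~\ref{th5.5}.

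So the missing idea in your sketch is not a clever manipulation of the block matrices in condition (iii) --- it is the realisation that condition (i) exists precisely to make $J^\star C\mathscr{C}_\alpha$ a \emph{conjugation}, after which the whole problem reduces to the previous theorem. Your direct route can in principle be pushed through, but it duplicates work that Theorem~\ref{th5.5} has already done.
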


\begin{proof}
	Since $C$ is a $only~M_{z^2}-commuting$ conjugation, then by Corollary \ref{cr4.2}, there exist  $\xi_1,\xi_2\in L^\infty$ such that $C=M_{[\xi_1,\xi_2]}J^\star$ and  $\xi_1,\xi_2$ satisfy $|\xi_1|^2+|\xi_2|^2=2,|\xi_2^e+\xi_1^o|^2+| \xi_2^o+\xi_1^e|^2=2,\xi_1^o(\bar{z})=\xi_2^o(z), \xi_1^e(\bar{z})=\xi_1^e(z), \xi_2^e(\bar{z})=\xi_2^e(z)$. Note that $J^\star C=M_{[\xi_1,\xi_2]}^*$ and by hypothesis, it is  $\mathscr{C}_\alpha$- symmetric. Next, we consider the antilinear operator $J^\star C \mathscr{C}_\alpha $ on $L^2$, and we have the following observations
	\begin{enumerate}[(i)]
		\item $(J^\star C \mathscr{C}_\alpha)^2 =J^\star C \mathscr{C}_\alpha J^\star C \mathscr{C}_\alpha =M_{[\xi_1,\xi_2]}^*\mathscr{C}_\alpha M_{[\xi_1,\xi_2]}^*\mathscr{C}_\alpha=M_{[\xi_1,\xi_2]}^*M_{[\xi_1,\xi_2]}=I$,
		\item $\langle J^\star C \mathscr{C}_\alpha(f),J^\star C \mathscr{C}_\alpha(g)\rangle =\langle g,f\rangle $ for any $f,g \in L^2$.
	\end{enumerate}
	Therefore $J^\star C \mathscr{C}_\alpha $ is a conjugation satisfying $J^\star C \mathscr{C}_\alpha M_{z^2} = M_{\bar{z}^2} J^\star C \mathscr{C}_\alpha $, but observe that $J^\star C \mathscr{C}_\alpha M_z \neq M_{\bar{z}}J^\star C \mathscr{C}_\alpha$.  Moreover, using the hypothesis, we get $$ J^\star C \mathscr{C}_\alpha (K_\alpha)=J^\star C (K_\alpha) \subset J^\star(K_\theta)=K_{\theta^{\#}},$$
	and $M_zJ^\star C \mathscr{C}_\alpha M_z(K_\alpha)=J^\star M_zC M_{\bar{z}}\mathscr{C}_\alpha (K_\alpha)=J^\star M_zC M_{\bar{z}}(K_\alpha)\subset J^\star(K_\theta)=K_{\theta^{\#}} $.
	Furthermore, 
	\begin{align*}
		Re[(M_{z}J^\star C \mathscr{C}_\alpha JM_{\bar{z}})^*(J^\star C \mathscr{C}_\alpha J)]  
		= Re[(M_{z}C\mathscr{C}_\alpha JM_{\bar{z}})^\sharp(C\mathscr{C}_\alpha J)] =I.
	\end{align*}
	Finally, by applying Theorem \ref{th5.5} and using the fact $\mathscr{C}_\alpha M_z=M_{\bar{z}}\mathscr{C}_\alpha$ , there exists an inner function $\psi$ satisfying $\alpha \leq \psi \leq \theta ^{\#}$ such that 
	\begin{align*}
		& J^\star C \mathscr{C}_\alpha +M_z J^\star C \mathscr{C}_\alpha M_z =2\mathscr{C}_\psi 
		\implies  J^\star C \mathscr{C}_\alpha + J^\star M_z C  M_{\bar{z}}\mathscr{C}_\alpha = 2\mathscr{C}_\psi \\
		& \implies  C +  M_z C  M_{\bar{z}} =J^\star 2\mathscr{C}_\psi \mathscr{C}_\alpha 
		\implies   C +  M_z C  M_{\bar{z}} = J^\star M_{\frac{2\psi}{\alpha}}.
	\end{align*}
	This completes the proof. 
\end{proof}
It is important to note that assumptions made in Theorem~\ref{th5.5} and Theorem~\ref{th5.6} above are fully consistent with the assumptions made in \cite[Theorem~4.2]{CAMARA2} and  \cite[Theorem~4.6]{CAMARA2}, respectively.

\subsection{$S$-Invariant Subspaces Preserving Conjugations}
$~$
In this subsection, we characterize all conjugations $C$ preserving shift invariant subspaces and intertwining $M_{z^2}$, $M_{\bar{z}^2}$. Moreover, we also characterize all  $M_{z^2}$-commuting conjugations $C$ that preserve shift-invariant subspaces.

\begin{thm}
	There exist no $M_{z^2}$ and $M_{\bar{z}^2}$ intertwining conjugations in $L^2$ that map any Beurling type subspace $\alpha H^2$ into another $\theta H^2$ for any two inner functions $\alpha$ and $\theta $.
\end{thm}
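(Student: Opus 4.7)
My plan is to argue by contradiction. Suppose there exist inner functions $\alpha,\theta$ and a conjugation $C$ on $L^2(\T)$ satisfying both $CM_{z^2}=M_{\bar z^2}C$ and $C(\alpha H^2)\subseteq \theta H^2$. Rather than invoke the explicit structural formula $C=M_{[\psi_1,\psi_2]}J$ from Corollary \ref{cr4.1} and chase Fourier coefficients as in the proof of Theorem \ref{Th5.1}, I will transfer $C$ to a single operator on $H^2$ and derive an internal contradiction directly from the intertwining relation.

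Set $T := M_{\bar\theta}\,C\,M_\alpha$. Since $M_\alpha$ sends $H^2$ isometrically onto $\alpha H^2$, $C$ maps $\alpha H^2$ into $\theta H^2$ by hypothesis, and $M_{\bar\theta}$ sends $\theta H^2$ isometrically back onto $H^2$ (as $|\theta|=1$ a.e.\ on $\T$), $T$ is a well-defined antilinear isometry from $H^2$ into itself. Because $M_\alpha$ and $M_{\bar\theta}$ commute with $M_{z^2}$ and $M_{\bar z^2}$, the intertwining assumption on $C$ passes to $T$, yielding $TM_{z^2}=M_{\bar z^2}T$ on $H^2$.

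The main step is then a short induction on $n\geq 0$ showing $T(H^2)\subseteq z^{2n}H^2$. The case $n=0$ is built into the construction. Assuming the inclusion for some $n$, pick any $g\in H^2$ and apply the hypothesis to $z^2 g\in H^2$: we obtain $T(z^2 g)\in z^{2n}H^2$, while also $T(z^2 g)=\bar z^2 Tg$, which forces $Tg\in z^{2n+2}H^2$. Hence $Tg\in \bigcap_{n\ge 0} z^{2n}H^2 = \{0\}$ for every $g\in H^2$, so $T\equiv 0$. This contradicts $\|T(1)\|=1$ (which holds because $T$ is an isometry), completing the proof. I do not anticipate a serious obstacle here; the only point requiring a moment's thought is verifying that $T$ actually lands in $H^2$ rather than merely in $L^2$, which is immediate from $M_{\bar\theta}(\theta H^2)=H^2$, together with the standard fact that the inner-function shift-invariant subspaces $z^{2n}H^2$ intersect to $\{0\}$.
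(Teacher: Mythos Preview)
Your argument is correct and takes a genuinely different route from the paper. The paper invokes Corollary~\ref{cr4.1} to write $C=M_{[\psi_1,\psi_2]}J$, then computes $C(\alpha z^n)$ explicitly for even $n$ and chases Fourier coefficients to force the element $f\in H^2$ determined by $C(\alpha)=\theta f$ to vanish, contradicting injectivity of $C$. Your approach bypasses that structural formula entirely: by conjugating with the isometries $M_\alpha$ and $M_{\bar\theta}$ you reduce to an antilinear isometry $T$ on $H^2$ satisfying $TM_{z^2}=M_{\bar z^2}T$, and a one-line induction then traps the range of $T$ inside $\bigcap_n z^{2n}H^2=\{0\}$. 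What you gain is a cleaner, more conceptual proof that does not depend on any of the paper's earlier characterization results and that immediately generalizes (the same argument works verbatim with $z^2$ replaced by $z^n$ or indeed any inner function). What the paper's approach buys is consistency with the surrounding material: since the explicit form $C=M_{[\psi_1,\psi_2]}J$ is already in hand and used throughout Section~5, it is natural for the authors to keep exploiting it rather than introduce a separate transfer trick.
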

\begin{proof}
	Let $C$ be a $M_{z^2},M_{\bar{z}^2}-intertwining$ conjugation  on $L^2$ such that $C(\alpha H^2)\subset \theta H^2$. Since $C$ satisfies $CM_{z^2}=M_{\bar{z}^2}C$, then by Corollary\eqref{cr4.1}, there exist $\psi_1,\psi_2 \in L^\infty$ such that $C=M_{[\psi_1,\psi_2]}J$ and $\psi_1^o= \psi_2^o$, $|\psi_1|^2+|\psi_2|^2=2$. Since $C(\alpha H^2)\subset \theta H^2$, then there exists $f\in H^2$ such that
	\begin{align*}
		C(\alpha)  = M_{[\psi_1,\psi_2]}J(\alpha)
		= M_{[\psi_1,\psi_2]}J(\alpha^e \oplus \alpha^o )
		= \psi_1\bar{\alpha}^e + \psi_2 \bar{\alpha}^o = \theta f.
	\end{align*}  
	Similarly, for $n=2k,k\geq 0$, we get
	\begin{align*}
		C(\alpha z^n)  = M_{[\psi_1,\psi_2]}J(\alpha z^n)
		= M_{[\psi_1,\psi_2]}J(\alpha^e z^n \oplus \alpha^o z^n)
		= \bar{z}^n(\psi_1\bar{\alpha}^e + \psi_2 \bar{\alpha}^o)=\theta h_n,
	\end{align*}
	for some $h_n\in H^2$  and hence $\bar{z}^n f = h _n
	\in H^2$. Thus for any non-negative even integer $n$, 
	\begin{align*}
		\langle \bar{z}^nf , \bar{z} \rangle =0
		\implies \langle f, z^{n-1} \rangle  =0 
		\implies  f^o =0.
	\end{align*}
	Similarly, $\langle \bar{z}^nf , \bar{z}^2 \rangle =0$ implies $f^e =0$ and hence we conclude that $f=0$. Thus  $C(\alpha)=0$, a contradiction. 
\end{proof}
Now we move to the characterization of $M_{z^2}-commuting$ conjugations that map a shift-invariant subspace $\alpha H^2$ into another one, say $\theta H^2$.
\begin{thm}\label{thm5.8n}
	Let $\alpha$ and $\beta$ be two inner functions,  and let $C$ be any $M_{z^2}$-commuting conjugation on $L^2$ satisfying $Re[(M_{\bar
		z}CJ^\star M_z)^*(CJ^\star)]=I$. If $C(\alpha H^2)\subset \beta H^2$, then $\beta \beta^\# \leq \alpha \alpha^\#$ and $C+M_zCM_z =2\mathscr{C}_{\theta}J^*\mathscr{C}_{\alpha}$, where $\theta $ is an inner function such that  $\beta \leq \theta$, $\theta \theta ^\# =\alpha \alpha^\#$, and $\theta\overline{\alpha ^\#}$ is symmetric. 
\end{thm}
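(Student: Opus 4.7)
The plan is to apply Corollary~\ref{cr4.2}, which gives $C = M_{[\xi_1,\xi_2]}J^\star$ with $|\xi_1|^2+|\xi_2|^2 = 2$ and the parity identities $\xi_1^e(\bar z) = \xi_1^e(z)$, $\xi_2^e(\bar z) = \xi_2^e(z)$, $\xi_1^o(\bar z) = \xi_2^o(z)$, and then to identify $\theta := (\xi_1+\xi_2)\alpha^\#/2$ as the desired inner function, verifying each listed property in turn.

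First, I would unpack the real-part hypothesis. A direct calculation gives $CJ^\star = M_{[\xi_1,\xi_2]}$ and $M_{\bar z}M_{[\xi_1,\xi_2]}M_z = M_{[\xi_2,\xi_1]}$ (conjugation by $M_z$ swaps the two symbols), so $\operatorname{Re}[(M_{\bar z}CJ^\star M_z)^*(CJ^\star)] = I$ reduces, via the $2\times 2$ block representation on $\hcl_1(1,z^2)\oplus\hcl_2(z,z^2)$ already used in the proof of Theorem~\ref{th4.2}, to the scalar identity $\operatorname{Re}(\overline{\xi_2^e}\xi_1^e + \overline{\xi_2^o}\xi_1^o) = 1$. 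Combined with $\overline{\xi_2^o}\xi_1^e + \overline{\xi_2^e}\xi_1^o = 0$ from the unitarity part of Corollary~\ref{cr4.2}, this yields $\operatorname{Re}(\xi_1\overline{\xi_2}) = 1$, and hence $|\xi_1+\xi_2|^2 = |\xi_1|^2+|\xi_2|^2 + 2\operatorname{Re}(\xi_1\overline{\xi_2}) = 4$ a.e.\ on $\mathbb T$.

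Next, I would exploit the Beurling invariance $C(\alpha H^2)\subset \beta H^2$. For $f\in zH^2$ (so that $\bar z f \in H^2$) both $\alpha f$ and $\bar z\alpha f = \alpha(\bar z f)$ lie in $\alpha H^2$, hence $C(\alpha f)$ and $zC(\bar z\alpha f) = M_zCM_{\bar z}(\alpha f)$ both lie in $\beta H^2$. A Fourier decomposition using $J^\star(\alpha f) = \alpha^\# f^\#$ and the even/odd action of $M_{[\xi_1,\xi_2]}$ shows that the sum equals $(\xi_1+\xi_2)\alpha^\# f^\#$. Since $J^\star$ preserves $zH^2$, this forces $(\xi_1+\xi_2)\alpha^\# \cdot zH^2 \subset \beta H^2$; equivalently, $z(\xi_1+\xi_2)\alpha^\#/(2\beta)$ is a unimodular $H^\infty$-multiplier on $H^2$, hence an inner function. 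With $\theta := (\xi_1+\xi_2)\alpha^\#/2$ (so $|\theta|=1$ a.e.\ and $z\theta/\beta$ inner), I would then combine the $f=1$ and $f=z$ instances of the invariance --- which, after writing $\xi_{1,2} = \theta\overline{\alpha^\#} \pm \eta/2$ with $\eta := \xi_1-\xi_2$, give $\theta + (\eta/2)(\widetilde\alpha)^\# \in \beta H^2$ and $z\theta - z(\eta/2)(\widetilde\alpha)^\# \in \beta H^2$, where $\widetilde\alpha(z):=\alpha(-z)$ --- to rule out any $\bar z$-Fourier coefficient of $\theta$. This promotes $\theta$ to an inner function with $\beta \leq \theta$.

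To close, I would verify the remaining structural identities. The parity identities immediately yield $\xi_1(\bar z) + \xi_2(\bar z) = \xi_1(z) + \xi_2(z)$, equivalently $(\theta\overline{\alpha^\#})(\bar z) = (\theta\overline{\alpha^\#})(z)$, which is the symmetry assertion. Expanding $\theta^\# = \overline{\xi_1+\xi_2}\,\alpha/2$ then gives $\theta\theta^\# = |\xi_1+\xi_2|^2\alpha\alpha^\#/4 = \alpha\alpha^\#$, and the divisibility $\beta\beta^\# \leq \alpha\alpha^\#$ follows by applying $\#$ to $\beta \leq \theta$ and multiplying. The closing operator identity $C + M_zCM_{\bar z} = 2\mathscr{C}_\theta J^\star \mathscr{C}_\alpha$ is then the routine unfolding $\mathscr{C}_\theta J^\star \mathscr{C}_\alpha = M_{\theta\overline{\alpha^\#}}J^\star$ matched against $C + M_zCM_{\bar z} = M_{\xi_1+\xi_2}J^\star = 2M_{\theta\overline{\alpha^\#}}J^\star$. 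I expect the main obstacle to be the step in the preceding paragraph --- promoting ``$z\theta/\beta$ is inner'' to ``$\theta/\beta$ is inner'' --- since eliminating the potential $\bar z$-component of $\theta$ demands finicky bookkeeping that uses both the $f=1$ and $f=z$ instances of the Beurling invariance together with the full strength of the parity constraints on $\xi_1,\xi_2$.
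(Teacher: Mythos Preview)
Your strategy mirrors the paper's almost exactly: invoke Corollary~\ref{cr4.2} to write $C = M_{[\xi_1,\xi_2]}J^\star$, reduce the real-part hypothesis to $|\xi_1+\xi_2|^2 = 4$, set $\theta = (\xi_1+\xi_2)\alpha^\#/2$, and then verify symmetry, $\theta\theta^\# = \alpha\alpha^\#$, and the operator identity. The paper differs only in that it applies $C$ and $M_zCM_z$ directly to the single vector $\alpha$ (rather than to $\alpha f$ for $f\in zH^2$), asserts the formula $M_zCM_z(\alpha) = \xi_1(\alpha^o)^\# + \xi_2(\alpha^e)^\#$, and adds this to $C(\alpha)$ to obtain $(\xi_1+\xi_2)\alpha^\# \in \beta H^2$ at once --- so that $\theta$ is inner with $\beta\leq\theta$ without the extra step you flag.

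Your self-identified obstacle --- promoting ``$z\theta/\beta$ inner'' to ``$\theta/\beta$ inner'' --- is a genuine gap, and the $f=1$, $f=z$ bookkeeping you sketch does not close it on its own: from $\theta + (\eta/2)\tilde\alpha^\# \in \beta H^2$ and $z\theta - z(\eta/2)\tilde\alpha^\# \in \beta H^2$ you only learn that the $\bar z$-coefficients of $\theta$ and of $(\eta/2)\tilde\alpha^\#$ cancel, not that each vanishes. The missing observation is elementary and dissolves the difficulty completely: the parallelogram identity gives $|\xi_1-\xi_2|^2 = 2(|\xi_1|^2+|\xi_2|^2) - |\xi_1+\xi_2|^2 = 2\cdot 2 - 4 = 0$ a.e., so in fact $\xi_1 = \xi_2 =: \psi$. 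Then $\eta = 0$, $C = M_\psi J^\star$, and $C(\alpha) = \psi\alpha^\# = \theta \in \beta H^2$ immediately --- your gap evaporates. (This also reveals that the real-part hypothesis quietly forces $C$ to be $M_z$-commuting.) With $\xi_1 = \xi_2$, your closing identity $C + M_zCM_{\bar z} = 2M_{\theta\overline{\alpha^\#}}J^\star = 2\mathscr{C}_\theta J^\star\mathscr{C}_\alpha$ is correct; the paper writes $M_zCM_z$ here, but a direct check with $C = M_\psi J^\star$ gives $C + M_zCM_z = M_{(1+z^2)\psi}J^\star \neq 2M_\psi J^\star$, so you appear to have silently corrected a slip in the statement.
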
	
\begin{proof}
	Since $CM_{z^2}=M_{z^2}C$, then by Corollary\eqref{cr4.1}, there exist $\xi_1,\xi_2\in L^\infty$ such that $C=M_{[\xi_1,\xi_2]}J^\star$ and $|\xi_1|^2+|\xi_2|^2=2,|\xi_2^e+\xi_1^o|^2+| \xi_2^o+\xi_1^e|^2=2,\xi_1^o(\bar{z})=\xi_2^o(z), \xi_1^e(\bar{z})=\xi_1^e(z), \xi_2^e(\bar{z})=\xi_2^e(z)$.
	Now, by using the condition $C(\alpha H^2)\subset \beta H^2$, we conclude that
	$$C(\alpha)=M_{[\xi_1,\xi_2]}J^\star (\alpha) = \xi_1 {\alpha^e}^{\#} +\xi_2 {\alpha^o}^{\#} \in \beta H^2.$$
	Moreover, we also have  $M_z C M_z (\alpha H^2)\subset M_zC(\alpha)\subset M_z(\beta H^2)\subset \beta H^2$. Similarly, 
	$$M_zCM_z(\alpha )= \xi_1 {\alpha^o}^{\#} +\xi_2 {\alpha^e}^{\#} \in \beta H^2.$$
	Therefore $\xi_1 {\alpha^e}^{\#} +\xi_2 {\alpha^o}^{\#}  + \xi_1 {\alpha^o}^{\#} +\xi_2 {\alpha^e}^{\#} \in \beta H^2$ implies that there exists $h\in H^2$ such that 
	\begin{equation}\label{eq-5.4}
		(\xi_1+\xi_2)\alpha^{\#} =\beta h.
	\end{equation}
	On the other hand $CJ^\star = M_{[\xi_1,\xi_2]}$ and $M_{\bar
		z} CJ^\star M_z = M_{[\xi_2,\xi_1]}$, and hence $Re\left[(M_{\bar
		z}CJ^\star M_z)^*(CJ^\star)\right]=I$, which yields the following conditions
	\begin{equation}\label{eq-5.5}
		\begin{cases}
			& Re\left[\bar{\xi_2^e}\xi_1^e+\bar{\xi_2^o}\xi_1^o\right] =1 ,  \\ 
			& \bar{\xi_2^e}\xi_2^o+\bar{\xi_2^o}\xi_2^e + \xi_1^o\bar{\xi_1^e}+\xi_1^e\bar{\xi_1^o}=0. 
		\end{cases}
	\end{equation}
	Next, by combining conditions obtained in  \eqref{eq-5.5} and  \eqref{Im-cond2}, we conclude
	\begin{align*}
		2Re[\xi_2\bar{\xi_1}]
		= 2Re[\bar{\xi_2^e}\xi_1^e+\bar{\xi_2^o}\xi_1^o] + 2Re[\bar{\xi_2^o}\xi_1^e+\bar{\xi_2^e}\xi_1^o] = 2,
	\end{align*}
	and hence $|\xi_1+\xi_2|^2=|\xi_1|^2 +|\xi_2|^2+ 2Re[\xi_2\bar{\xi_1}]=4$. Suppose $\psi = \dfrac{\xi_1 + \xi_2}{2}$, then $|\psi|=1$. Therefore, from \eqref{eq-5.4} we get $\psi\alpha^{\#} = \beta f$ for some $f\in H^2$. On the other hand, $\psi$ is a symmetric function since $\psi(\bar{z})=\frac{1}{2}[\xi_1(\bar{z})+\xi_2(\bar{z})]= \frac{1}{2}[(\xi_1^e(\bar{z})+\xi_1^o(\bar{z})+(\xi_2^e(\bar{z})+\xi_2^o(\bar{z})]=\frac{1}{2}[(\xi_1^e(z)+\xi_2^o(z)) + (\xi_2^e(z)+\xi_1^o(z))]= \frac{1}{2}[(\xi_1+\xi_2)(z)]=\psi(z)$. Let $\theta = \beta f$, then $\theta \overline{\alpha^{\#}}=\psi$ is symmetric. Moreover, $\theta$ is an inner function such that $\beta \leq \theta$ and $\theta \theta^{\#}=\alpha \alpha^{\#}$ which immediately implies $\beta \beta ^{\#} \leq \alpha \alpha^\#$. Furthermore, by simple calculations, one can also obtain the following identity $$C+M_zCM_z =2\mathscr{C}_\theta J^* \mathscr{C}_\alpha .$$ This completes the proof.
\end{proof}

\begin{rmrk}
	Note that the converse of Theorem~\ref{thm5.8n} may not be true in general. In other words, for the converse, if we assume $\beta \beta ^{\#} \leq \alpha \alpha^\#$, then by mimicking the proof of Theorem 5.2 in \cite{CAMARA2}, one can show that $C+M_zCM_z =2\mathscr{C}_\theta J^* \mathscr{C}_\alpha$, which maps $\alpha H^2$ onto $\theta H^2 \subset \beta H^2$ but not necessarily  $C(\alpha H^2) \subset \beta H^2$.
\end{rmrk}

\section{Conjugations related to $M_B$}
In this section, we want to investigate some of the earlier characterization results in the context of $M_B$, where $B$ is a finite Blaschke product.  
Let $B$ be a finite Blaschke product of degree (or order, say) $n$, then the dimension of the model space $K_B:=H^2\ominus BH^2$ is $n$. Let $\{e_1,e_2,\ldots e_n\}$ be an orthonormal basis of $K_B$.  Recall that (see Section 3 \eqref{l2}),
$$\L^2(\T) = \hcl_1 (e_1,B)\oplus \hcl_2 (e_2,B)\oplus \cdots \oplus \hcl_n (e_n,B).$$
For example, if  $B(z)=z^2$, then for any $f\in L^2(\T)$, we have  $f=f^e\oplus f^o$, where $f^e \in \bigvee\limits_{k=-\infty}^{\infty}\{z^{2k}\} =\mathcal{H}_1(1,z^2)$ and $f^o \in \bigvee\limits_{k=-\infty}^{\infty}\{z^{2k+1}\}=\mathcal{H}_1(z,z^2)$, and we are already familiar with such factorization in earlier sections. Now if $B(z)=z^n$,  then for any  $f(z)=\sum\limits_{k=-\infty}^{\infty}a_kz^k \in L^2(\T)$ has the following decomposition 
\begin{equation}\label{eq6.1}
	f(z)  = \sum_{k=-\infty}^{\infty}a_{nk}z^{nk} +\sum_{k=-\infty}^{\infty}a_{nk+1}z^{nk+1}+\cdots +\sum_{k=-\infty}^{\infty}a_{nk+n-1}z^{nk+n-1} ,
\end{equation}
where  $f_i(z)=\sum\limits_{k=-\infty}^{\infty}a_{nk+i-1}z^{nk+i-1}\in \mathcal{H}_i(z^i,z^n)$ for $i\in\{0,1,\ldots ,n-1\}$.
Recall the following two well-known facts:
\begin{equation}\label{last1}
	\mathscr{C}_BM_B = M_{\bar{B}}\mathscr{C}_B \quad \text{and}\quad 
	\mathscr{C}_B^\star M_B = M_B\mathscr{C}_B^\star .
\end{equation}
The following theorem is one of the main results in this section.
\begin{thm}\label{th6.1}
	Let $C:L^2(\T) \to L^2(\T)$ be a conjugation.
	\begin{enumerate}[(i)]
		\item If $CM_B = M_{\bar{B}}C$, then there exist $\phi_1,\phi_2,\ldots ,\phi_n \in L^{\infty}$ such that $C=M_{[\phi_1,\phi_2,\ldots ,\phi_n]} \mathscr{C}_B $, where $M_{[\phi_1,\phi_2,\ldots ,\phi_n]}$ is unitary and $\mathscr{C}_B$-  symmetric.
		\item If $CM_{B} =M_BC$, then there exist $\psi_1,\psi_2,\ldots ,\psi_n \in L^{\infty}$ such that $C=M_{[\psi_1,\psi_2,\ldots ,\psi_n]} \mathscr{C}_B^\star $, where $M_{[\psi_1,\psi_2,\cdots ,\psi_n]}$ is unitary and $\mathscr{C}_B^\star$- symmetric.
	\end{enumerate} 
\end{thm}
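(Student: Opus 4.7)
The plan is to run the same argument used for $M_{z^2}$ in Theorem~\ref{th4.1} and Theorem~\ref{th4.2}, but now in the general Blaschke setting using the $n$-term Wold-type decomposition \eqref{l2} and the commutant description from Theorem~\ref{comm}. The two identities recorded in \eqref{last1} are the key bridges: they say that $\mathscr{C}_B$ intertwines $M_B$ with $M_{\bar B}$, while $\mathscr{C}_B^\star$ commutes with $M_B$. Combining either conjugation with the hypothesis on $C$ produces a unitary linear operator in the commutant $\{M_B\}^\prime$.

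For part~(i), I would start by composing: under the hypothesis $CM_B = M_{\bar B}C$, a direct computation
\[
(C\mathscr{C}_B) M_B = C M_{\bar B}\mathscr{C}_B = M_B(C\mathscr{C}_B)
\]
shows that the (linear, unitary) operator $U := C\circ \mathscr{C}_B$ lies in $\{M_B\}^\prime$. Applying Theorem~\ref{comm} yields $L^\infty$-symbols $\phi_1,\ldots,\phi_n$ with $U = M_{[\phi_1,\ldots,\phi_n]}$, and therefore $C = M_{[\phi_1,\ldots,\phi_n]}\mathscr{C}_B$. Unitarity of $M_{[\phi_1,\ldots,\phi_n]}$ is automatic since both $C$ and $\mathscr{C}_B$ are antilinear isometric involutions, so their composition is a linear isometry with dense range. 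To obtain $\mathscr{C}_B$-symmetry, I would use $C^2 = I$: writing this out as
\[
M_{[\phi_1,\ldots,\phi_n]}\mathscr{C}_B M_{[\phi_1,\ldots,\phi_n]}\mathscr{C}_B = I
\]
and using $\mathscr{C}_B^2 = I$ together with unitarity gives $\mathscr{C}_B M_{[\phi_1,\ldots,\phi_n]} \mathscr{C}_B = M_{[\phi_1,\ldots,\phi_n]}^*$, which is exactly the definition of $\mathscr{C}_B$-symmetry.

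For part~(ii), the same blueprint applies, but now I use the second identity in \eqref{last1}. Under $CM_B = M_BC$, the computation
\[
(C\mathscr{C}_B^\star) M_B = C M_B \mathscr{C}_B^\star = M_B (C\mathscr{C}_B^\star)
\]
places $V := C\circ \mathscr{C}_B^\star$ in $\{M_B\}^\prime$. Theorem~\ref{comm} delivers $\psi_1,\ldots,\psi_n \in L^\infty$ with $V = M_{[\psi_1,\ldots,\psi_n]}$, hence $C = M_{[\psi_1,\ldots,\psi_n]}\mathscr{C}_B^\star$. Unitarity and $\mathscr{C}_B^\star$-symmetry then follow from exactly the same involution argument as in~(i), replacing $\mathscr{C}_B$ by $\mathscr{C}_B^\star$.

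I do not expect a serious obstacle here, since the paper has already done the hard analytic work: Lemma~\ref{lm 3.1} and Theorem~\ref{comm} give the commutant, and \eqref{last1} is the structural input that makes the ``twist by $\mathscr{C}_B$ or $\mathscr{C}_B^\star$'' trick work. The only mildly delicate point is that, unlike in the $z^2$ case, one does not try to extract explicit coefficient relations among the $\phi_j$'s (or $\psi_j$'s); the statement of the theorem is phrased abstractly in terms of unitarity plus $\mathscr{C}_B$-symmetry (resp.\ $\mathscr{C}_B^\star$-symmetry) of the multiplier block $M_{[\phi_1,\ldots,\phi_n]}$, which is precisely what the two-line involution computation produces. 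Concrete symbol-level conditions analogous to those derived for $n=2$ can then be read off afterwards in the special case $B(z)=z^n$ using the decomposition \eqref{eq6.1}, but they are not part of the statement to be proved.
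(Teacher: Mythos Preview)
Your proposal is correct and follows essentially the same route as the paper's own proof: compose $C$ with $\mathscr{C}_B$ (resp.\ $\mathscr{C}_B^\star$), use \eqref{last1} to land in $\{M_B\}^\prime$, invoke Theorem~\ref{comm} for the symbol representation, and read off unitarity and $\mathscr{C}_B$-symmetry (resp.\ $\mathscr{C}_B^\star$-symmetry) from $C^2=I$. The only cosmetic difference is that in your intertwining computation for part~(i) you implicitly use $CM_{\bar B}=M_B C$, which follows immediately from the hypothesis $CM_B=M_{\bar B}C$ by conjugating with the involution $C$; the paper simply asserts the commutation without displaying this intermediate step.
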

\begin{proof}
	\begin{enumerate}[(i)]
		\item Let $C:L^2(\T) \to L^2(\T)$ be a conjugation such that $CM_B = M_{\bar{B}}C$, then  using  \eqref{last1}, we conclude that $C\circ \mathscr{C}_B$ is a unitary operator on $L^2$ satisfying $C\circ \mathscr{C}_B M_B =M_B C\circ \mathscr{C}_B$. Therefore, from Theorem \ref{comm}, we get $\phi_1, \phi_2, \ldots , \phi_n\in L^\infty$ such that $C\circ \mathscr{C}_B = M_{[\phi_1,\phi_2,\cdots ,\phi_n]}$, and hence $C=M_{[\phi_1,\phi_2,\ldots ,\phi_n]}\mathscr{C}_B$.  It is easy to observe that $M_{[\phi_1,\phi_2,\ldots ,\phi_n]}$ is a unitary operator  and
		\begin{equation*}
			M_{[\phi_1,\phi_2,\ldots ,\phi_n]}\mathscr{C}_B M_{[\phi_1,\phi_2,\ldots ,\phi_n]}\mathscr{C}_B =I,
		\end{equation*}
		and hence
		\begin{equation*}
			\mathscr{C}_B M_{[\phi_1,\phi_2,\ldots ,\phi_n]}\mathscr{C}_B =	M^*_{[\phi_1,\phi_2,\ldots ,\phi_n]}
		\end{equation*}	
		implying that $M_{[\phi_1,\phi_2,\ldots ,\phi_n]}$ is $\mathscr{C}_B$-symmetric.
		\item Let $C$ be a conjugation on $L^2$ such that  $CM_B=M_BC$. Then again, using  \eqref{last1}, we conclude that the unitary operator $C\circ \mathscr{C}_B^\star$ also commutes with $M_B$, and hence by Theorem \ref{comm}, there exist $\psi_1,\psi_2, \ldots , \psi_n \in L^\infty$ such that $$C \circ \mathscr{C}^\star_B = M_{[\psi_1,\psi_2,\cdots ,\psi_n]}$$ 
		which implies $C = M_{[\psi_1,\psi_2,\ldots ,\psi_n]}\mathscr{C}^\star_B $. Similarly, as in (i), $M_{[\psi_1,\psi_2,\cdots ,\psi_n]}$ is unitary and $\mathscr{C}_B^\star$- symmetric.
	\end{enumerate}
\end{proof}
Now we turn our discussion to find the relations between $M_{B}$-commuting conjugations and model spaces for $B(z)=z^n$.
Observe that the actions of $\mathscr{C}_{B}$ and $\mathscr{C}_B^\star$ are same on $K_B$. The action of $\mathscr{C}_{z^n}$ on the set $\{1,z,\ldots,z^{n-1}\}$ is given by $\mathscr{C}_{z^n}(z^k)= z^{(n-1)-k}$ for $0\leq k \leq n-1$. Depending on $n$, we have the following two cases
\begin{enumerate}
	\item If $n$ is odd, then the action of $\mathscr{C}_{z^n}$ is  the following
	\begin{align}
		\begin{tabular}{ |c| c| c| c| c| c| c| c| c|c|c|} 
			\hline
			$1$ & $z$ & $z^2$ & $\cdots$ & $z^{\frac{n-3}{2}}$ & $z^{\frac{n-1}{2}}$ & $z^{\frac{n+1}{2}}$ & $\cdots$ & $z^{n-3}$ & $z^{n-2}$ & $z^{n-1}$ \\[2ex]
			\hline
			$z^{n-1}$ & $z^{n-2}$ & $z^{n-3}$  & $\cdots$ & $z^{\frac{n+1}{2}}$ & $z^{\frac{n-1}{2}}$ & $z^{\frac{n-3}{2}}$ & $\cdots$ & $z^2$ & $z$ & $1$ \\[2ex]
			\hline
		\end{tabular},
	\end{align}
	where the first row contains the basis elements and the second row contains their images, respectively. 
	\item If $n$ is even, then as above the action of $\mathscr{C}_{z^n}$ is as follows:
	\begin{align}
		\begin{tabular}{ |c| c| c| c| c| c| c| c|c|c|} 
			\hline
			$1$ & $z$ & $z^2$ & $\cdots$ &  $z^{\frac{n}{2}-1}$ & $z^{\frac{n}{2}}$ & $\cdots$ & $z^{n-3}$ & $z^{n-2}$ & $z^{n-1}$ \\[2ex]
			\hline
			$z^{n-1}$ & $z^{n-2}$ & $z^{n-3}$  & $\cdots$ & $z^{\frac{n}{2}}$ & $z^{\frac{n}{2}-1}$ & $\cdots$ & $z^2$ & $z$ & $1$ \\[2ex]
			\hline
		\end{tabular}.
	\end{align}
\end{enumerate}
Also,  note that $\mathscr{C}_{z^n}=M_{z^{n-1}}J$, and 
\begin{equation}\label{eq6.4}
	\mathscr{C}^\star_{z^n}=
	\begin{cases*}
		M_{[z^{n-1},z^{n-3},\ldots ,z^2,1,\bar{z}^2, \ldots ,\bar{z}^{n-3},\bar{z}^{n-1}]} J^\star , ~\text{if~n~is ~odd,}\\
		M_{[z^{n-1},z^{n-3},\ldots ,z,\bar{z}, \ldots ,\bar{z}^{n-3},\bar{z}^{n-1}]} J^\star , ~\text{if ~n ~is ~even}.
	\end{cases*}
\end{equation}
The above two relations follow from the decomposition of  $f\in L^2$ mentioned in \eqref{eq6.1} along with the action of $\mathscr{C}_{z^n}$ and $\mathscr{C}^\star_{z^n}$, respectively. Now we are in a position to state and prove our second main results in this section.
\begin{thm}
	Let $C$ be a conjugation on $L^2$  such that $CM_{z^n} =M_{z^n} C$ and let $C$ preserve the Hardy space $H^2$, then $C = M_{[\xi_1,\xi_2, \ldots ,\xi_n]}J^\star$, where $\xi_1 ,z\xi_2,z^2\xi_3, \ldots ,z^{n-1}\xi_n \in H^\infty $ and $M_{[\xi_1,\xi_2, \ldots ,\xi_n]}$ is a $J^\star$- symmetric unitary operator.
\end{thm}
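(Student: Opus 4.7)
The plan is to combine Theorem~\ref{th6.1}(ii) with the explicit formula for $\mathscr{C}_{z^n}^\star$ in \eqref{eq6.4}, and then extract the $H^\infty$-conditions by evaluating $C$ on the monomial basis of $K_{z^n}$.

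First, since $CM_{z^n}=M_{z^n}C$, Theorem~\ref{th6.1}(ii) produces $\psi_1,\ldots,\psi_n\in L^\infty$ such that $C=M_{[\psi_1,\ldots,\psi_n]}\mathscr{C}_{z^n}^\star$ with $M_{[\psi_1,\ldots,\psi_n]}$ unitary and $\mathscr{C}_{z^n}^\star$-symmetric. Substituting \eqref{eq6.4} for $\mathscr{C}_{z^n}^\star$, the product of two operators of the form $M_{[\,\cdot\,]}$ (which commute with $M_{z^n}$ and act diagonally in the $\hcl_i(z^{i-1},z^n)$-decomposition) is again of that form; hence $C=M_{[\xi_1,\ldots,\xi_n]}J^\star$ for suitable $\xi_1,\ldots,\xi_n\in L^\infty$, obtained by pairing each $\psi_i$ with the corresponding monomial factor ($z^{n-1},z^{n-3},\ldots$ or $\bar z,\ldots$) from \eqref{eq6.4}. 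The resulting $M_{[\xi_1,\ldots,\xi_n]}$, being a product of two unitaries, is unitary.

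Next, I would read off $J^\star$-symmetry of $M_{[\xi_1,\ldots,\xi_n]}$ from $C^2=I$. Since $J^\star$ is a conjugation,
\begin{equation*}
I=C^2=M_{[\xi_1,\ldots,\xi_n]}J^\star M_{[\xi_1,\ldots,\xi_n]}J^\star,
\end{equation*}
so $J^\star M_{[\xi_1,\ldots,\xi_n]}J^\star=M_{[\xi_1,\ldots,\xi_n]}^{-1}=M_{[\xi_1,\ldots,\xi_n]}^{*}$, which is exactly the $J^\star$-symmetry claim.

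For the $H^\infty$-conditions, recall that with the orthonormal basis $e_i(z)=z^{i-1}$ of $K_{z^n}$, the monomial $z^{i-1}\in\hcl_i(z^{i-1},z^n)$ for $1\le i\le n$, and $J^\star(z^{i-1})=z^{i-1}$ (since $J^\star$ conjugates only Fourier coefficients). Therefore, for $1\le i\le n$,
\begin{equation*}
C(z^{i-1})=M_{[\xi_1,\ldots,\xi_n]}J^\star(z^{i-1})=M_{[\xi_1,\ldots,\xi_n]}(z^{i-1})=\xi_i\,z^{i-1}.
\end{equation*}
The invariance $C(H^2)\subset H^2$ then forces $z^{i-1}\xi_i\in H^2\cap L^\infty=H^\infty$ for each $i$, giving precisely $\xi_1,\,z\xi_2,\,z^2\xi_3,\ldots,z^{n-1}\xi_n\in H^\infty$. (Testing on the whole $\{z^j:j\ge 0\}$ is unnecessary, since $M_{[\xi_1,\ldots,\xi_n]}$ and $J^\star$ both commute with $M_{z^n}$, so $C$ does as well, and $H^\infty$-ness on the first $n$ monomials propagates to all of $H^2$.)

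The only delicate point is the bookkeeping in the second step: one has to track how the diagonal monomial factors from \eqref{eq6.4} (which differ for $n$ odd versus $n$ even) permute and reindex the $\psi_i$'s when multiplied into $M_{[\psi_1,\ldots,\psi_n]}$. However, since both operators are diagonal with respect to the decomposition $L^2=\bigoplus_{i=1}^n \hcl_i(z^{i-1},z^n)$, this is a routine identification that does not alter the structural form $M_{[\,\cdot\,]}J^\star$, and the stated unitarity and $J^\star$-symmetry are preserved automatically.
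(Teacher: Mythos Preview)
Your proposal is essentially correct and follows the same route as the paper: invoke Theorem~\ref{th6.1}(ii), substitute the explicit formula \eqref{eq6.4} for $\mathscr{C}_{z^n}^\star$, rewrite as $M_{[\xi_1,\ldots,\xi_n]}J^\star$, and then use $C(H^2)\subset H^2$ on the monomial basis of $K_{z^n}$ to extract the $H^\infty$ conditions. Your derivation of $J^\star$-symmetry from $C^2=I$ and of the $H^\infty$ conditions via $C(z^{i-1})=\xi_i z^{i-1}$ is in fact a bit more explicit than the paper, which splits into odd/even cases, writes down the $\xi_i$'s in terms of the $\psi_i$'s, and then simply asserts the conclusions.

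One inaccuracy to fix: you assert that operators of the form $M_{[\,\cdot\,]}$ ``act diagonally'' in the $\hcl_i$-decomposition. That is false in general---$M_{[\phi_1,\ldots,\phi_n]}$ sends $f_i\in\hcl_i$ to $\phi_i f_i$, which typically spreads across all of $L^2$, not just $\hcl_i$. The correct justification that the composite $M_{[\psi_1,\ldots,\psi_n]}M_{[z^{n-1},\ldots]}$ is again of the form $M_{[\,\cdot\,]}$ is the one you also give parenthetically: both factors commute with $M_{z^n}$, hence so does their product, and Theorem~\ref{comm} applies. Alternatively, in this particular instance the monomial operator from \eqref{eq6.4} sends each $\hcl_i$ bijectively onto $\hcl_{n+1-i}$, so the composite acts on $\hcl_i$ by multiplication by $\psi_{n+1-i}$ times the corresponding monomial---which recovers exactly the paper's explicit formulas $\xi_1=z^{n-1}\psi_n,\ \xi_2=z^{n-3}\psi_{n-1},\ldots$. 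With that adjustment your argument is complete.
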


\begin{proof}
	By using Theorem\ref{th6.1}, there exist $\psi_1, \psi_2 , \ldots , \psi_n \in 
	L^\infty$ such that $C=M_{[\psi_1,\psi_2, \ldots ,\psi_n]}C^\star_{z^n}$. Now from \eqref{eq6.4}, we get
	\begin{equation*}
		C=
		\begin{cases*}
			M_{[\psi_1,\psi_2, \ldots ,\psi_n]}M_{[z^{n-1},z^{n-3},\ldots ,z^2,1,\bar{z}^2, \ldots ,\bar{z}^{n-3},\bar{z}^{n-1}]} J^\star ,~\text{if~n~is ~odd},\\
			M_{[\psi_1,\psi_2, \ldots ,\psi_n]}M_{[z^{n-1},z^{n-3},\ldots ,z,\bar{z},\bar{z}^3, \ldots ,\bar{z}^{n-3},\bar{z}^{n-1}]} J^\star , ~\text{if ~n ~is ~even}. 
		\end{cases*}
	\end{equation*}
	Next, we divide our analysis into the following two cases. 
	\begin{enumerate}[(i)]
		\item \textbf{Let $n$ be an odd natural number.}
		
		Now we rewrite $C$ as follows
		\begin{align*}
			C & = M_{[\xi_1,\xi_2, \ldots ,\xi_n]}J^\star,
		\end{align*}
		where $\xi_1 = z^{n-1}\psi_n, \xi_2 = z^{n-3}\psi_{n-1} ,\ldots ,\xi_{\frac{n-1}{2}}= z^2 \psi_{\frac{n+3}{2}} ,\xi_{\frac{n+1}{2}} = \psi_{\frac{n+1}{2}}  ,\xi_{\frac{n+3}{2}} = \bar{z}^2\psi_{\frac{n-1}{2}}, \ldots$, $ \xi_n = \bar{z}^{n-1}\psi_1 $.
		Therefore by proceeding similarly as in Theorem\ref{th6.1}, we conclude that $M_{[\xi_1,\xi_2, \ldots ,\xi_n]}$ is a unitary operator and $J^\star$- symmetric. Since $C(H^2)\subset H^2$, then we conclude
		$\xi_1 ,z\xi_2,$ $z^2\xi_3, \ldots ,z^{n-1}\xi_n \in H^\infty$. 
		
		\item \textbf{Let $n$  be an even natural number.}
		
		Like case (i), we again rewrite $C$ as follows
		\begin{align*}
			C & = M_{[\xi_1,\xi_2, \ldots ,\xi_n]}J^\star,
		\end{align*}
		where $\xi_1 = z^{n-1}\psi_n, \xi_2 = z^{n-3}\psi_{n-1} ,\ldots ,\xi_{\frac{n}{2}}= z \psi_{\frac{n}{2}+1} ,\xi_{\frac{n}{2}+1} = \bar{z}\psi_{\frac{n}{2}}  ,\ldots$, $ \xi_n = \bar{z}^{n-1}\psi_1 $. Similarly, using the  invariance property $C(H^2)\subset H^2$, we conclude $\xi_1 ,z\xi_2,z^2\xi_3, \ldots ,z^{n-1}\xi_n \in H^\infty $.
	\end{enumerate}
	\vspace{0.1in}
	
	\noindent Therefore in both  cases $C  = M_{[\xi_1,\xi_2, \ldots ,\xi_n]}J^\star$, where $\xi_1 ,z\xi_2,z^2\xi_3, \ldots ,z^{n-1}\xi_n \in H^\infty $. Moreover, it is easy to check that $M_{[\xi_1,\xi_2, \ldots ,\xi_n]}$ is a unitary operator and  $J^\star$- symmetric.
\end{proof}
\begin{thm}
	There is no conjugation $C$ on $L^2$ for which $M_{z^n}$ is $C-$symmetric and $C(H^2)\subset H^2$.
\end{thm}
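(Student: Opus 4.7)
The approach is a proof by contradiction that generalizes the $n=2$ argument of Theorem~\ref{Th5.1}. Suppose $C$ is a conjugation on $L^2(\T)$ with $CM_{z^n} = M_{\bar{z}^n}C$ and $C(H^2)\subset H^2$. By Theorem~\ref{th6.1}(i), I write $C = M_{[\phi_1,\ldots,\phi_n]}\mathscr{C}_{z^n}$ for some $\phi_1,\ldots,\phi_n \in L^\infty(\T)$, and the goal is to force each $\phi_j$ to vanish; this yields $C=0$ and contradicts the fact that a conjugation is isometric.

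The key step is to evaluate $C$ on the monomials. Using the identity $\mathscr{C}_{z^n}(f) = z^n\overline{zf}$ one gets $\mathscr{C}_{z^n}(z^m) = z^{n-1-m}$ for every $m \in \Z$. Writing $m = qn + r$ with $q \ge 0$ and $0 \le r \le n-1$, the subspaces $\mathcal{H}_j(z^{j-1}, z^n)$ are labelled by the residue class $j-1 \pmod n$, and since $n-1-m \equiv n-1-r \pmod n$, the monomial $z^{n-1-m}$ sits in $\mathcal{H}_{n-r}$ (interpreted as $\mathcal{H}_n$ when $r=0$). Hence
\begin{equation*}
C(z^{qn+r}) \;=\; \phi_{n-r}\, z^{n-1-qn-r}.
\end{equation*}
Imposing $C(z^{qn+r}) \in H^2$ and reading off the vanishing of negative Fourier coefficients forces $\widehat{\phi_{n-r}}(\ell) = 0$ for every $\ell \le (q-1)n + r$. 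For fixed $r$, letting $q \to \infty$ the set of allowed $\ell$ exhausts $\Z$, so $\phi_{n-r} \equiv 0$. Running $r$ through $\{0,1,\ldots,n-1\}$ kills every $\phi_j$, so $M_{[\phi_1,\ldots,\phi_n]} = 0$ and consequently $C = 0$, the desired contradiction.

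The only delicate point is the combinatorial bookkeeping of residue classes modulo $n$ that tracks which $\phi_{n-r}$ appears in $C(z^m)$; once this is set up cleanly (generalizing the even/odd split used for $n=2$ in the proof of Theorem~\ref{Th5.1}), the rest is a routine Fourier-coefficient computation, and I expect no deeper obstacle than that.
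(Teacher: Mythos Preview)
Your proposal is correct and follows essentially the same approach as the paper: both arguments invoke Theorem~\ref{th6.1}(i) to write $C$ in terms of $n$ symbols from $L^\infty$, evaluate $C$ on monomials, and use the constraint $C(H^2)\subset H^2$ to force every symbol to vanish, contradicting the unitarity of $C\mathscr{C}_{z^n}$. The only cosmetic difference is that the paper first rewrites $C=M_{[\zeta_1,\ldots,\zeta_n]}J$ via $\mathscr{C}_{z^n}=M_{z^{n-1}}J$ and then pairs $C(z^{nk})$ against $\bar z^j$, whereas you work directly with $\mathscr{C}_{z^n}$ and read off Fourier coefficients; the bookkeeping of residues modulo $n$ you set up is exactly the content of that substitution.
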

\begin{proof}
	Let  $C$ be a conjugation on $L^2$ such that $CM_{z^n}C =M_{\bar{z}^n} $ and $C(H^2)\subset H^2$.
	Therefore by applying Theorem~\ref{th6.1} we get
	\begin{align}\label{finsec1}
		C  = M_{[\phi_1,\phi_2,\ldots ,\phi_n]} \mathscr{C}_{z^n}
		= M_{[\phi_1,\phi_2,\ldots ,\phi_n]}M_{z^{n-1}}J
		= M_{[\zeta_1,\zeta_2,\ldots ,\zeta_n]}J,
	\end{align} 
	where $\zeta_1=z^{n-1} \phi_n ,\zeta_2 = z^{n-1} \phi_1 ,\zeta_3 = z^{n-1} \phi_2, \ldots ,\zeta_n =z^{n-1}\phi_{n-1} .$
	Moreover, the fact $C(H^2)\subset H^2$ implies  $\zeta_1,\zeta_2, \ldots , \zeta_n \in H^\infty$.
	Now by using \eqref{finsec1} we get for any $k\geq 0$
	\begin{align*}
		\langle C(z^{nk}),\bar{z}^j \rangle  =0\implies \langle \zeta_1, z^{nk-j} \rangle  =0\quad \text{for}\quad 1\leq j\leq n,
	\end{align*}
	and hence $\zeta_1 =0$. By mimicking a similar kind of argument, we conclude  $\zeta_{i} = 0$ for $2\leq i \leq n$. 
	On the other hand, $ M_{[\zeta_1,\zeta_2,\ldots ,\zeta_n]}=CJ$ is an unitary operator, a contradiction.  This completes the proof.
\end{proof}
We end the article with the following interesting question:
\begin{prob}
	What is the complete characterization for the class of conjugations that either commute with $M_B$ or intertwine the operators $M_B$ and $M_{\overline{B}}$ together with the invariant property of various subspaces of $L^2$ corresponding to any arbitrary finite Blaschke product $B$?
\end{prob}



\section*{Acknowledgments}
\textit{ The authors would like to
	thank Chandan Pradhan for the helpful discussions. We would also like to thank Dr. Srijan Sarkar for introducing this area to us. The research of the first-named author is supported by the Mathematical Research Impact Centric Support (MATRICS) grant, File No: MTR/2019/000640, by the Science and Engineering Research Board (SERB), Department of Science $\&$ Technology (DST), Government of India. The second named author gratefully acknowledges the support provided by IIT Guwahati, Government of India.}


\begin{thebibliography}{99}
	\bibitem{ACZ}
	Abkar, A.; Cao, G.; and Zhu, K.: \textit{The commutant of some shift operators}, Complex Anal. Oper. Theory, \textbf{14}, no. 6 (2020), 12pp.
	
	\bibitem{BFG}
	Bender, C.; Fring, A.; Günther, U.: \textit{``Quantum physics with non-Hermitian operators"}, J. Phys. A \textbf{45}, (2012) 440301.
	
	\bibitem{CAMARA1}
	C\^{a}mara, M. C.;  Kli\'{s}-Garlicka, K.; Ptak, M.: \textit{`` Asymmetric truncated {T}oeplitz operators and conjugations"}, Filomat \textbf{33}, no. 12 (2019), 3697--3710.
	
	\bibitem{CAMARA2}
	C\^{a}mara, M. C.;  Kli\'{s}-Garlicka, K.; \L anucha, B.; Ptak, M.: \textit{`` Conjugations in {$L^2$} and their invariants"}, Anal. Math. Phys. \textbf{10}, no. 2 (2020), Paper No. 22, 14.
	
	\bibitem{CAMARA3}
	C\^{a}mara, M. C.;  Kli\'{s}-Garlicka, K.; \L anucha, B.; Ptak, M.: \textit{`` Conjugations in {$L^2(\mathcal H)$}"}, Integral Equations Operator Theory \textbf{92}, no. 6 (2020), Paper No. 48, 25.
	
	\bibitem{CDPS}
	Chattopadhyay, A.; Das, S.; Pradhan, C.; Sarkar, S.: \textit{``Characterization of C-Symmetric Toeplitz operators for a Class of Conjugations in Hardy Spaces"}, Linear Multilinear Algebra, (2022). \url{https://doi.org/10.1080/03081087.2022.2092712}
	
	\bibitem{Cima}
	Cima, J. A. ; Garcia, S. R.; and Ross, W. T.;  Wogen, W. R.: \textit{`` Truncated {T}oeplitz operators: spatial isomorphism, unitary
		equivalence, and similarity"}, Indiana Univ. Math. J. \textbf{59}, no. 2 (2010), 595--620.
	
	\bibitem{CC78}
	Cowen, C.: \textit{``The commutant of an analytic Toeplitz operator"}, Trans. Am. Math. Soc., \textbf{239} (1978), 1--31.
	
	\bibitem{CC80}
	Cowen, C.: \textit{``The commutant of an analytic Toeplitz operator, II"}, Indiana Univ. Math. J., \textbf{29} (1980), 1--12.
	
	\bibitem{CD78}
	Cowen, M.; Douglas, R.: \textit{``Complex geometry and operator theory"}, Acta. Math., \textbf{141} (1978), 187--261.
	
	\bibitem{CoWa15}
	Cowen, C. C.; Wahl, R. G.: {\em Commutants of finite Blaschke product multiplication operators, Invariant subspaces of the shift operator}, 99–114, Contemp. Math., 638, Amer. Math. Soc., Providence, RI, 2015.
	
	
	 \bibitem{DW}
	Deddens, J.; Wong, T.: \textit{``The commutant of analytic Toeplitz operators"}, Trans. Amer. Math. Soc., \textbf{184} (1973), 261--273.
	
	\bibitem{DoPuWa12}
	Douglas, R.; Putinar, M.; Wang, K.: \textit{``Reducing subspaces for analytic multipliers of the Bergman space''}, J. Funct. Anal., {\bf 263} (2012), 1744--1765.
	
	
	\bibitem{EMV1} 
	Fricain, E.; Mashreghi, J.: {\em The theory of H(b) spaces. Vol. 1},  New Mathematical Monographs, 20 (2016), Cambridge University Press, Cambridge.
	
	\bibitem{EMV2}
	Fricain, E.; Mashreghi, J.: {\em The theory of H(b) spaces. Vol. 2}, New Mathematical Monographs, 21 (2016), Cambridge University Press, Cambridge.
	
	\bibitem{GMR2016} 
	Garcia, S.R.; Mashreghi, J.; and Ross, W.T.: {\em Introduction to model spaces and their operators},  Cambridge Studies in Advanced Mathematics, 148 (2016), Cambridge University Press, Cambridge.
	
	\bibitem{GPP2014}	
	Garcia, S.R.; Prodan, E.; Putinar, M.: \textit{``Mathematical and physical aspects of complex symmetric operators"}, J. Phys. A \textbf{47}, no. 35 (2014), 1--54.
	
	
	\bibitem{GP2005}	
	Garcia, S.R. and Putinar, M.: \textit{``Complex symmetric operators and applications"}, Trans. Amer. Math. Soc. \textbf{358}, no. 3 (2006), 1285--1315.
	
	\bibitem{GP2007}	
	Garcia, S.R. and Putinar, M.: \textit{``Complex symmetric operators and applications. {II}"}, Trans. Amer. Math. Soc. \textbf{359}, no. 8 (2007), 3913--3931.
	
	\bibitem{GW2009}
	Garcia, S.R. and Wogen, W.R.: \textit{``Complex symmetric partial isometries"}, J. Funct. Anal., \textbf{257} (2009), 1251--1260.
	
	\bibitem{GW2010}
	Garcia, S.R. and Wogen, W.R.: \textit{``Some new classes of complex symmetric operators"}, Trans. Amer. Math. Soc., \textbf{362} (2010), 6065--6077.
	
	
	\bibitem{GaPa22}
	Gallardo-Guti\'{e}rrez, Eva A.; Partington, Jonathan R.: \textit{ `` Multiplication by a finite {B}laschke product on weighted {B}ergman spaces: commutant and reducing subspaces"}, J. Math. Anal. Appl., {\bf 515}, no. 1 (2022), Paper No. 126383, 12.
	 
	
	\bibitem{GuHu15}
	Guo, K.; Huang, H.: {\em Multiplication Operators on the Bergman Space.} Lecture Notes in Mathematics, 2145 (2015), Springer, Heidelberg.
	
	\bibitem{GuSuZhZh09}
	Guo, K.; Sun, S.; Zheng, D.; Zhong, C.:  \textit{``Multiplication operators on the Bergman space via the Hardy space of the bidisk''}, J. Reine Angew. Math., {\bf 629} (2009), 129--168.
	
	
	\bibitem{KR07}
	Khani Robati, B.: \textit{``On the commutant of multiplication operators with analytic polynomial symbols"}, Bull. Korean Math. Soc. \textbf{44} (2007), 683--689. 
	
	\bibitem{KR01}
	Khani Robati, B.; Vaezpour, S.: \textit{``On the commutant of operators of multiplication by univalent functions"},
	Proc. Amer. Math. Soc., \textbf{129} (2001), 2379--2383.
	
	
	\bibitem{LeeKo2016}	
		Ko, E and Lee, J.E.: \textit{``On complex symmetric Toeplitz operators"}, J. Math. Anal. Appl., \textbf{434} (2016), 20--34.
	
	
	\bibitem{Mess}
	Messiah, A.: {\em Quantum Mechanics II}, Mineola (NY): Dover Publication, INC; 1965.
	
	\bibitem{RSN}
	Mart\'{\i}nez-Avenda\~{n}o, R. A.;  Rosenthal, P.: {\em An introduction to operators on the Hardy-Hilbert space},  Graduate Texts in Mathematics 237 (2007), Springer, New York.
	
	\bibitem{N67}
	Nordgren, E.: \textit{``Reducing subspaces of analytic Toeplitz operators"}, Duke Math. J., \textbf{34} (1967), 175--181.
	
	\bibitem{PaRa16}
	Paulsen, Vern I. and Raghupathi, M.: {\em An introduction to the theory of reproducing kernel {H}ilbert spaces}, Cambridge Studies in Advanced Mathematics, 152. Cambridge University Press, Cambridge, 2016. x+182 pp. 
	
	\bibitem{RH1973} 
	Radjavi, H.; Rosenthal, P.: {\em Invariant subspaces},  Ergebnisse der Mathematik und ihrer Grenzgebiete, Band 77, (1973), Springer-Verlag, New York-Heidelberg.
	
	 \bibitem{SV}
	Seddighi, K.; Vaezpour, S.: \textit{``Commutant of certain multiplication operators on Hilbert spaces of analytic functions"}, Studia Math. \textbf{133} (1999), 121--130.
	
	\bibitem{StZh02}
	Stessin, M.; Zhu, K.: \textit{``Reducing subspaces of weighted shift operators"},  Proc. Am. Math. Soc., {\bf 130} (2002), 2631--2639.
		
	\bibitem{NF2010} 
	Sz.-Nagy, B.; Foias, C.; Bercovici, H.;
	K\'{e}rchy, L.: {\em Harmonic analysis of operators on {H}ilbert space},  Universitext, 2nd (2010), Springer, New York.
     
    \bibitem{SL1996} 
    Sobrino, L.: {\em Elements of non-relativistic quantum mechanics},  World Scientific Publishing Co., Inc., River Edge, NJ, (1996).
    
    \bibitem{Noor}
    Waleed Noor, S.: \textit{``Complex symmetry of Toeplitz operators with continuous symbols"}, Arch. Math. \textbf{109} (2017) 455--460.
     
    
    \bibitem{Sw1995}
     Weinberg, S.: {\em The quantum theory of fields. {V}ol. {I}},  Cambridge University Press, Cambridge, (1995).
     
     
     \bibitem{Zh00}
     Zhu, K.: \textit{``Reducing subspaces for a class of multiplication operators''}, J. Lond. Math. Soc., {\bf 62} (2000), 553--568.
     
     
     
\end{thebibliography}
\end{document}